\documentclass[11pt,reqno,oneside]{amsart}

\usepackage[a4paper,total={6.6in,9.9in}]{geometry}
\usepackage{amsmath,amssymb,amsfonts,amsthm,mathtools,mathrsfs}
\usepackage{exscale}
\usepackage{latexsym}
\usepackage{graphicx}
\usepackage{subfigure}
\usepackage{enumerate}
\usepackage{cite}
\usepackage{xcolor}
\usepackage{hyperref}
\hypersetup{
  colorlinks=true,
  linkcolor={blue!80!black},
  urlcolor={blue!80!black},
  citecolor={blue!80!black}
}

\numberwithin{equation}{section}

\newtheorem{definition}{Definition}[section]
\newtheorem{theorem}{Theorem}[section]
\newtheorem{lemma}[theorem]{Lemma}
\newtheorem{proposition}[theorem]{Proposition}
\theoremstyle{remark}
\newtheorem{remark}[theorem]{Remark}

\title[Weyl-type law for Maxwell transmission eigenfunctions]{A Weyl-type law for surface-localized eigenfunctions of the Maxwell transmission problem}
\author{Yan Jiang}
\address{Department of Mathematics, City University of Hong Kong, Hong Kong SAR, China.}
\email{yjian24@cityu.edu.hk}

\author{Hongyu Liu}
\address{Department of Mathematics, City University of Hong Kong, Hong Kong SAR, China.}
\email{hongyu.liuip@gmail.com, hongyliu@cityu.edu.hk}

\author{Kai Zhang}
\address{Department of Mathematics, Jilin University, Changchun, Jilin, China.}
\email{zhangkaimath@jlu.edu.cn}

\author{Haoran Zheng}
\address{School of Mathematical Sciences, South China Normal University, Guangzhou, Guangdong, China.}
\email{hrzheng@scnu.edu.cn}

\date{}

\begin{document}
	
\maketitle
\noindent{\bf Abstract:}~~
This work studies surface-localized transmission eigenfunctions for time-harmonic electromagnetic scattering. Prior results on this phenomenon are mostly qualitative, proving existence without quantifying distribution. Here we provide a quantitative analysis for the Maxwell transmission eigenvalue problem, covering both TE and TM modes. We first establish a Weyl asymptotic law for the full counting function, with cubic growth with respect to the radius $R$. We then prove Weyl-type upper and lower bounds for the counting functions restricted to surface-localized modes, and demonstrate that  they share the same growth order three. To our knowledge, this is the first quantitative result of its kind, demonstrating that surface-localized eigenfunctions form a non-negligible fraction of the high-frequency spectrum.

\noindent{\bf Keywords:}~~Electromagnetic scattering; transmission eigenfunctions; surface localization; Weyl-type law.

\noindent{\bf AMS subject classification}:~~35P20; 35Q61; 58J50; 35B40; 78A40


\section{Introduction}
Wave localization constitutes a pervasive phenomenon in wave physics, typically arising when propagation is constrained by geometric features, material inhomogeneities, or boundary effects. Canonical examples include confinement induced by external potentials, such as the quantum harmonic oscillator \cite{Simon79}, as well as disorder-driven Anderson localization \cite{Anderson58,YSH23}. Beyond these classical mechanisms, it is now firmly established that spatial confinement of wave energy can also manifest in the absence of external potentials, stemming purely from geometric or interface-induced effects \cite{DE95,LM07}. A central problem at the intersection of physics and spectral theory is to elucidate how such localization emerges intrinsically from the governing equations.

Following the definition set forth in \cite{NG13}, a function $u$ defined on a domain $\Omega \subset \mathbb{R}^d$ is called $L^p$-localized (for $p \geq 1$) if there exists a bounded subset $\Omega_0 \subset \Omega$ that supports the bulk of the $L^p$-norm of $u$, i.e.,
$$
\frac{\|u\|_{L^p(\Omega \setminus \Omega_0)}}{\|u\|_{L^p(\Omega)}} \ll 1 \quad \text{and} \quad \frac{\mu_d(\Omega_0)}{\mu_d(\Omega)} \ll 1,
$$
where $\mu_d$ denotes the $d$-dimensional Lebesgue measure (area for $d=2$ and volume for $d=3$). Qualitatively, such a localized function is predominantly concentrated on a spatially compact subset of the domain and assumes negligibly small values on its complement.

For scalar waves governed by the Laplace or Helmholtz operator, localization phenomena have been the subject of extensive investigation in the literature \cite{CTV03,CTV05,FK07,HHN97,HHT09}. High-frequency modes tend to concentrate near domain boundaries, giving rise to whispering gallery modes, whereas geometric irregularities—such as narrow channels, sharp corners, or waveguide junctions—can effectively trap energy and yield localized eigenstates \cite{GN13, NG13}. These effects have been observed experimentally and analyzed mathematically across diverse settings, including quantum waveguides, optical cavities, and acoustic resonators. A central theme emerging from these studies is that \emph{boundaries and interfaces serve as effective confining structures}, shaping the spatial distribution of eigenfunctions without the need for explicit external potentials.

Motivated by the geometric structure of the eigenfunctions of the Laplace operator, the present paper investigates surface-localization phenomena for interior transmission eigenfunctions arising in time-harmonic electromagnetic (EM) scattering from inhomogeneous media. Let \(\Omega\subset\mathbb{R}^3\) be a bounded Lipschitz domain, and let \(\lambda\in L^\infty(\Omega)\) denote the electric permittivity, with \(\operatorname*{ess\,inf}_{\Omega}\lambda>0\). In the rigorous analysis that follows, we restrict our attention to the constant-coefficient setting \(\lambda=\xi^2\), where \(\xi^2\neq 1\). The interior transmission problem then seeks pairs \((\mathbf E_j,\mathbf H_j)\in H(\operatorname{curl},\Omega)\times H(\operatorname{curl},\Omega)\), \(j=1,2\), such that
\begin{equation}\label{eq:trans1}
	\left\{\begin{array}{lll}
		\operatorname{curl} \mathbf{E}_1 - \mathrm{i}k\mathbf{H}_1 = \mathbf{0}, & \operatorname{curl} \mathbf{H}_1 + \mathrm{i}k\lambda\mathbf{E}_1 = \mathbf{0}, \quad &\text{in } \Omega \\
		\operatorname{curl} \mathbf{E}_2 - \mathrm{i}k\mathbf{H}_2 = \mathbf{0}, & \operatorname{curl} \mathbf{H}_2 + \mathrm{i}k\mathbf{E}_2 = \mathbf{0}, \quad &\text{in } \Omega \\
		\nu \times \mathbf{E}_1 = \nu \times \mathbf{E}_2, & \nu \times  \mathbf{H}_1 = \nu \times  \mathbf{H}_2, \quad &\text{on } \partial\Omega,
	\end{array}\right.
\end{equation}
where \(\nu\) denotes the exterior unit normal to \(\partial\Omega\), \(k\in\mathbb{R}_+\) is the wavenumber of the propagating electromagnetic wave, and \(\mathrm{i} := \sqrt{-1}\). The system \eqref{eq:trans1} admits the trivial solution \(\mathbf{E}_j \equiv \mathbf{H}_j \equiv 0\), \(j=1,2\). If nontrivial solutions \((\mathbf{E}_j,\mathbf{H}_j)_{j=1,2}\) exist, then \(k\in\mathbb{R}_+\) is called a (real) \textit{transmission eigenvalue}, and \(\mathbf{E}_j,\mathbf{H}_j\) are the associated \textit{transmission eigenfunctions}. Since \(\operatorname{curl} \mathbf{E}_j = \mathrm{i}k\mathbf{H}_j\), eliminating the magnetic fields \(\mathbf{H}_j\), \(j=1,2\), from \eqref{eq:trans1} yields the following reduced formulation for \(\mathbf{E}_j\in H(\operatorname{curl}^2,\Omega)\), \(j=1,2\):
\begin{equation}\label{eq:trans2}
	\left\{\begin{array}{lll}
		\operatorname{curl} \operatorname{curl} \mathbf{E}_1 - k^2\lambda\mathbf{E}_1 = \mathbf{0}, & \operatorname{div}(\lambda\mathbf{E}_1) = \mathbf{0} \quad &\text{in } \Omega, \\
		\operatorname{curl} \operatorname{curl} \mathbf{E}_2 - k^2\mathbf{E}_2 = \mathbf{0}, & \operatorname{div} \mathbf{E}_2 = \mathbf{0} \quad &\text{in } \Omega, \\
		\nu \times \mathbf{E}_1 = \nu \times \mathbf{E}_2, & \nu \times (\operatorname{curl} \mathbf{E}_1) = \nu \times (\operatorname{curl} \mathbf{E}_2) \quad &\text{on } \partial\Omega.
	\end{array}\right.
\end{equation}

Recent studies have revealed nontrivial geometric features of transmission eigenfunctions, including enhanced concentration near the boundary of the scattering medium. Under the standard sign-definite contrast hypotheses, the transmission spectrum is known to be discrete (see \cite{CCH16,LC12}). In the constant-radial setting adopted below, the explicit TE/TM characteristic equations yield infinitely many positive real transmission eigenvalues with no finite accumulation point. Numerical evidence in \cite{CDHLW21} further indicates that a substantial portion of the associated eigenfunctions exhibit surface localization, with their $L^2$-energy concentrating in a neighborhood of $\partial\Omega$. The spectral density of such surface-localized eigenmodes was recently studied for the acoustic eigenvalue problem in \cite{JLZZ25}. In the present paper, we consider the Maxwell transmission eigenvalue problem \eqref{eq:trans2} for radially symmetric media with constant electric permittivity, as derived in \cite{MS12}, and establish an estimate on the geometric structure of the corresponding transmission eigenfunctions.

From a physical standpoint, the transmission interface constitutes a region of intense wave--medium interaction, wherein waves undergo reflection, refraction, and mode conversion as a consequence of material contrast. It is thus natural to expect that, under appropriate conditions, the interface itself may serve as an effective trapping mechanism, leading to pronounced wave-energy concentration in its immediate vicinity. This \emph{interface-induced localization} is conceptually analogous to boundary localization for Laplacian eigenfunctions; however, it arises here from coupling between distinct media rather than from purely geometric confinement.

Rigorous results on such surface-localization phenomena are available for several transmission models. For the Helmholtz-type problem
\begin{equation*}
	\Delta w + k^2 \mathbf{n}^2 w = 0, \quad \Delta v + k^2 v = 0, \quad \text{in } \Omega; \quad w = v, \quad \partial_\nu w = \partial_\nu v, \quad \text{on } \partial \Omega,
\end{equation*}
where \(\mathbf{n} = \sqrt{\lambda}\) denotes the refractive index,  it was shown in \cite{CDHLW21} that, along high-frequency sequences $k_m \to \infty$, either ${w_m}$ or ${v_m}$ exhibits surface localization depending on whether $\mathbf{n}>1$ or $0<\mathbf{n}<1$. Remarkably, Deng et al. established the existence of a \emph{double surface localization} phenomenon, whereby both components concentrate in a neighbourhood of the boundary \cite{DJLZ22}. Subsequent work has extended this effect to a broad class of models, including electromagnetic, acoustic--elastic, and purely elastic transmission problems \cite{DLWW22,DLLT23,JLZZ23}.

A central question arising from these observations extends beyond the mere existence of surface-localized transmission eigenmodes to their quantitative spectral prevalence. Specifically, obtaining a precise count—or asymptotic density—of such modes in the spectrum poses a substantial mathematical challenge. For self-adjoint operators such as the Laplacian, questions of this nature are addressed via microlocal analysis, semiclassical measures, and Weyl-type asymptotics, furnishing asymptotic counts of eigenfunctions with specific localization properties. For Maxwell transmission eigenvalue problems, however, the situation is markedly different. The intrinsic non-self-adjointness, complex spectrum, and absence of eigenfunction orthogonality all preclude the direct use of classical methods from spectral geometry.

Further complicating the analysis is the structure of the Maxwell system itself: the unknown vector fields involve the curl–curl operator and are subject to divergence constraints along with boundary transmission conditions. These features render both the spectral analysis and the qualitative study of eigenfunctions significantly more delicate. Hence, whether one can meaningfully quantify the share of surface-localized modes in the full spectrum remains a largely open problem.

Building on the acoustic spectral density analysis in \cite{JLZZ25}, we address the corresponding problem for Maxwell transmission problem by studying their surface-localized eigenfunctions behavior. Our analysis, which covers both TE and TM polarizations, yields a Weyl-type asymptotic law for the full counting function and establishes matching upper and lower cubic-order bounds for the counting functions of surface-localized modes. These results show, in particular, that such modes attain the same leading-order spectral count as the full transmission spectrum, offering a quantitative characterization of their prevalence.

The statistical prevalence of surface-localized Maxwell transmission eigenmodes established in this work indicates that boundary concentration is a robust spectral phenomenon rather than an exceptional occurrence, carrying significant physical implications. Since electromagnetic energy is frequently confined near material interfaces, surface effects tend to govern scattering and resonant behavior across extended frequency intervals. This observation has direct practical implications for radar scattering and stealth technology, where the activation or suppression of surface-dominated modes can markedly alter detectability and imaging performance \cite{CDHLW21}. Moreover, the strong boundary sensitivity of these modes provides a rigorous basis for nondestructive evaluation and material parameter retrieval, since even minor perturbations of surface geometry or material properties lead to observable spectral variations \cite{HCS14}. From a photonic standpoint, the prevalence of surface-trapped transmission eigenmodes parallels known surface-guided resonances such as surface plasmons, facilitating enhanced wave confinement and light–matter coupling along interfaces. This connection underscores the relevance of transmission eigenfunctions to emerging photonic transport and sensing platforms \cite{KMTKMS,RZPLPDNSS13,WCJS08}, and positions them as a key mechanism for interface-driven electromagnetic localization.

The paper is organized as follows.
We begin in Section~\ref{Preliminaries} by introducing the notation and stating the principal results. Section~\ref{sec:weyl} provides the proof of Theorem~\ref{thm:weyl}. The lower and upper bounds are proved in Sections~\ref{sec:lower_bound} and~\ref{sec:upper_bound}, respectively. Finally, Section~\ref{sec:numerics} illustrates the theoretical findings through numerical simulations.


\section{Preliminaries and main results}\label{Preliminaries}

To begin with, we define precisely what we mean by a surface-localized function.
\begin{definition}\label{def:localization}
Given $\varepsilon \in (0,1)$ and $\delta > 0$, a nontrivial vector field $\mathbf{u}$ defined on a domain $\Omega \subset \mathbb{R}^3$ is called \emph{$(\varepsilon,\delta)$-localized near the surface} if
\begin{equation*}
\mathcal{L}_\delta(\mathbf{u}):=\frac{\|\mathbf{u}\|_{L^2(\Omega_\delta)}}{\|\mathbf{u}\|_{L^2(\Omega)}}>1-\varepsilon,
\end{equation*}
where
\begin{equation*}
\Omega_\delta := \{ x \in \Omega : \operatorname{dist}(x,\partial\Omega) < \delta \}.
\end{equation*}
\end{definition}

\begin{remark}
	The quantity $\mathcal{L}_\delta(\mathbf{u})$ measures the proportion of $L^2$-energy contained in a boundary layer of thickness $\delta$. It is invariant under scaling of $\mathbf{u}$ and provides a normalized measure of surface concentration.
\end{remark}


\subsection{Quantitative framework and main results}

In this work, we aim to quantitatively characterize the occurrence of surface-localized transmission eigenfunctions in the high-frequency regime. Therefore, we formulate the problem in terms of counting functions for both TE and TM modes, and then establish Weyl-type asymptotic laws. We first introduce the counting functions for TE modes. The TM counting functions are defined in the same way.

For each fixed $m \geq 1$, let $\mathcal K_m^{\mathrm{TE}}$ denote the complete set of TE interior transmission eigenvalues of angular degree $m$, ordered increasingly:
\begin{equation*}
\mathcal K_m^{\mathrm{TE}} = \{ k_{m,n}^{\mathrm{TE}} : n \geq 1 \},
\end{equation*}
where $k_{m,n}^{\mathrm{TE}}$ is the $n$-th eigenvalue in the set. The azimuthal index, ranging from $-m$ to $m$, is suppressed since the radial eigenvalues are independent of it. Set
\begin{equation*}
\mathcal K^{\mathrm{TE}}=\bigcup_{m\geq1}\mathcal K_m^{\mathrm{TE}}.
\end{equation*}

Let $R>0$ denote the counting threshold. For the unit ball, every modal eigenvalue associated with angular degree $m$ has angular multiplicity $2m+1$, corresponding to the azimuthal indices $-m,\cdots,m$. Hence the full TE counting function is defined by
\begin{equation*}
N^{\mathrm{TE}}(R):=\sum_{m=1}^{\infty} (2m+1) \,
\#\{\, n \geq 1 : 0 < k_{m,n}^{\mathrm{TE}} < R \,\}.
\end{equation*}
Thus, $N^{\mathrm{TE}}(R)$ counts the TE eigenvalue contributions below $R$, weighted by their angular multiplicities. If the same numerical eigenvalue occurs for different angular degrees, the corresponding modal contributions are counted separately.
	
For $k = k_{m,n}^{\mathrm{TE}} \in \mathcal K_m^{\mathrm{TE}}$, let $\bigl( \mathbf E_{1,m,n}^{\mathrm{TE}}, \mathbf E_{2,m,n}^{\mathrm{TE}} \bigr)$
be a nontrivial TE electric-field transmission eigenpair of \eqref{eq:trans2} associated with this eigenvalue. We say that this modal eigenpair is surface-localized provided that
\begin{equation*}
\max_{j=1,2} \mathcal L_\delta \bigl( \mathbf E_{j,m,n}^{\mathrm{TE}} \bigr) > 1 - \varepsilon,
\end{equation*}
for some $\varepsilon \in (0,1)$ and $\delta > 0$. This condition is independent of the normalization of the eigenfields, as $\mathcal L_\delta$ is invariant under multiplication by a nonzero scalar. The azimuthal index is suppressed here, since for each fixed $j$, the $2m+1$ azimuthal copies of $\mathbf E_{j,m,n}^{\mathrm{TE}}$ share the same radial profile and thus the same localization ratio.

The localized TE counting function is then given explicitly by
\begin{equation*}
N_{\mathrm{loc}}^{\mathrm{TE}}(R;\varepsilon,\delta):=
\sum_{m=1}^{\infty}(2m+1)\sum_{\substack{k\in\mathcal K_m^{\mathrm{TE}}\\0<k<R}}
\mathbf 1 \left\{ \max_{j=1,2} \mathcal L_\delta \bigl(\mathbf E_{j,m,k}^{\mathrm{TE}}\bigr)
>1-\varepsilon
\right\}.
\end{equation*}
Thus, \(N_{\mathrm{loc}}^{\mathrm{TE}}(R;\varepsilon,\delta)\) counts the TE modal contributions below \(R\) whose eigenfunctions are localized near the boundary, weighted by their angular multiplicity.

The TM quantities are defined by replacing \(\mathrm{TE}\) with \(\mathrm{TM}\) throughout. In particular, from the definition above, for \(\sigma \in \{\mathrm{TE}, \mathrm{TM}\}\), we have
\begin{equation*}
0\leq N_{\mathrm{loc}}^{\sigma}(R;\varepsilon,\delta) \leq N^{\sigma}(R), \quad
\sigma\in\{\mathrm{TE},\mathrm{TM}\}.
\end{equation*}
For each fixed \(R\), only finitely many angular degrees contribute; the corresponding angular cutoff is established in Section~\ref{sec:weyl}. 

Our main results determine the high-frequency growth of the full counting functions and give quantitative bounds for the localized counting functions.

\begin{theorem}\label{thm:weyl}
Let \(\Omega = B_1(0) \subset \mathbb{R}^3\), let \(\lambda \in (0,1)\) be as in \eqref{eq:trans2}, and define \(\xi := \sqrt{\lambda}\). Then the transmission eigenvalue counting functions satisfy
\begin{equation*}
N^{\mathrm{\sigma}}(R)=\frac{2 R^3}{9 \pi}\left(1-\xi^3\right)+\mathcal{O}(R^2),
\quad R\to\infty, \quad \sigma \in\{\mathrm{TE},\mathrm{TM}\}.
\end{equation*}
\end{theorem}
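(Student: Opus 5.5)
Separation of variables in the ball reduces \eqref{eq:trans2} to a scalar determinant equation for each angular degree $m$. For TE modes, $\mathbf E_1 = a\, j_m(\xi k r)\,\mathbf X_m^{\ell}$ and $\mathbf E_2 = b\, j_m(kr)\,\mathbf X_m^{\ell}$, where $\mathbf X_m^\ell$ are the vector spherical harmonics. For TM modes, $\mathbf E_j$ are curls of these. The transmission conditions on $r=1$ give the characteristic functions
\begin{equation*}
d_m^{\mathrm{TE}}(k)= j_m(\xi k)\,\bigl(k j_m(k)\bigr)' - j_m(k)\,\bigl(\xi k\, j_m(\xi k)\bigr)' ,
\end{equation*}
together with an analogous TM determinant carrying weight factors $\xi^2$. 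The eigenvalues $k_{m,n}^\sigma$ are exactly the positive zeros of $d_m^\sigma$. The counting problem then becomes
\[
\sum_m (2m+1)\,\#\{n: k_{m,n}^\sigma<R\}.
\]

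The plan is to count the zeros of each $d_m^\sigma$ in $(0,R)$ with an error of $O(1)$ uniformly in $m$, and then sum over $m$.

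First, the angular cutoff. Using $j_m(x)>0$ and monotonicity on $x\lesssim m$ (below the turning point), I will show that $d_m^\sigma$ has no zeros for $k\le \nu/\xi$ up to $O(1)$, where $\nu=m+\tfrac12$. The argument: in the region $\xi k<\nu$, both Riccati–Bessel factors are nonoscillatory, and the Wronskian-type combination has a fixed sign, because $\xi<1$ gives distinct logarithmic derivatives. Hence only $m\lesssim \xi R$ contribute.

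Second, uniform asymptotics. With Debye/Langer uniform expansions, for $k>\nu$ write
\[
k j_m(k)\approx (k^2-\nu^2)^{-1/4}\cos\phi_\nu(k), \qquad \phi_\nu(k)=\sqrt{k^2-\nu^2}-\nu\arccos(\nu/k)-\tfrac{\pi}{4},
\]
and similarly at $\xi k$. In the region where both arguments exceed $\nu$, the determinant becomes, up to nonvanishing factors,
\[
\sin\bigl(\phi_\nu(k)-\phi_\nu(\xi k)\bigr)+\text{small}.
\]
Its zeros satisfy $\Phi_\nu(k):=\phi_\nu(k)-\phi_\nu(\xi k)=n\pi+O(1)$. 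In the intermediate region $\xi k<\nu<k$, the factor $j_m(\xi k)$ is exponentially small and nonoscillatory. The determinant then behaves like $\cos(\phi_\nu(k)+\text{const})$, and its zeros satisfy $\phi_\nu(k)=n\pi+O(1)$. Combining the regions, and using a sign-change and monotone-phase argument (Rouché or an argument principle on a thin strip) to rule out spurious zeros, the count is
\[
\#\{n:k_{m,n}^\sigma<R\}=\tfrac1\pi\bigl(\phi_\nu(R)-\phi_\nu(\xi R)_+\bigr)+O(1),
\]
where $\phi_\nu(\xi R)_+$ is taken as $0$ if $\xi R<\nu$, and the $O(1)$ is uniform in $m$. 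Wait: the cutoff in the first step says zeros start only near $k\sim\nu/\xi$, so I will reconcile the two regimes. The leading term $\frac{1}{\pi}(\phi_\nu(R)-\phi_\nu(\xi R))$ for $\nu<\xi R$, summed with weight $2\nu$ over $\nu<\xi R$, is what produces $1-\xi^3$. The intermediate-regime zeros must contribute only lower order, or cancel. I would check this against the claimed constant, and I expect the intermediate regime to contribute none of the leading order.

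Third, summation. Replace the sum by an integral via Euler–Maclaurin, with error $O(R^2)$ coming from $O(1)$ per $m$ times $\sum_{m\le R}(2m+1)$. Use
\[
\int_0^{R}2\nu\,\tfrac1\pi\Bigl(\sqrt{R^2-\nu^2}-\nu\arccos\tfrac{\nu}{R}\Bigr)\,d\nu=\frac{2R^3}{9\pi},
\]
a direct computation in which the two pieces give $\tfrac{2}{3\pi}R^3$ and $\tfrac{4}{9\pi}R^3$. The same integral at $\xi R$ gives $\frac{2(\xi R)^3}{9\pi}$. Subtracting yields $\frac{2R^3}{9\pi}(1-\xi^3)+O(R^2)$ for both TE and TM.

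The main obstacle is the uniform-in-$m$ $O(1)$ zero count, especially near the turning points $k\approx\nu$ and $\xi k\approx\nu$, where the Debye expansions fail and Airy-type (Olver) uniform asymptotics are needed. There I would first try Olver's expansion with explicit error bounds to show that the phase of $d_m^\sigma$ (written as $M\cos$ of an argument) is monotone with bounded error. The number of zeros then equals the phase increment divided by $\pi$, plus $O(1)$.
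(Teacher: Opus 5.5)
\emph{Step 1 is false.} It is not true that $d_m^\sigma$ has no zeros for $k\le\nu/\xi$, and the ``reconciliation'' you propose goes the wrong way.

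For $\nu<k<\nu/\xi$ only the factor $j_m(\xi k)$ is nonoscillatory; $j_m(k)$ still oscillates. As a result $d_m^\sigma$ has a root between essentially every two consecutive zeros of $j_m$. Your own step 2 says exactly this. In the paper's notation, $D_m^{\mathrm{TE}}=\xi H_m(\xi x)-H_m(x)$ runs from $-\infty$ to $+\infty$ across every interval $(\rho_n,\rho_{n+1}]$ that contains no zero of $j_m(\xi x)$.

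The actual zero-free region is $(0,\rho_1)$, where $\rho_1>m+\tfrac12$ is the first zero of $j_m$. So all orders up to $m\approx R$ contribute, not only $m\lesssim\xi R$. Nor is the intermediate regime of lower order. Counted with weights $2m+1$, there are $\frac{2R^3}{9\pi}(1-\xi^2)^{3/2}+o(R^3)$ roots with $k<\min\{R,\nu/\xi\}$; this is exactly the family counted in the proof of Theorem~\ref{thm:lower_bdd}. That is a fraction $(1-\xi^2)^{3/2}/(1-\xi^3)$ of the whole spectrum, roughly three quarters when $\xi=\tfrac12$.

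Your step 3 returns the correct constant only because its first integral runs over $\nu\in(0,R)$, which silently includes the regime you expected to be negligible. Actually imposing the step 1 cutoff gives a strictly smaller leading coefficient. The repair is:
\begin{itemize}
\item keep your step 2 count $\frac1\pi\bigl(\phi_\nu(R)-\phi_\nu(\xi R)_+\bigr)+\mathcal O(1)$ for every $\nu<R$ (it equals $B_{m+\frac12}(R)-B_{m+\frac12}(\xi R)+\mathcal O(1)$ in the paper's notation);
\item use $k<\rho_1$ as the zero-free region, which gives $\sum_{m<R}(2m+1)\,\mathcal O(1)=\mathcal O(R^2)$.
\end{itemize}

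\emph{The uniform-in-$m$ $\mathcal O(1)$ root count is deferred, and the mechanism you sketch for it is not right.} This count is the real content of the theorem. Above both turning points the determinant is not $\sin\bigl(\phi_\nu(k)-\phi_\nu(\xi k)\bigr)+\text{small}$. Differentiating the phases brings in unequal amplitudes $\alpha=\sqrt{k^2-\nu^2}$ and $\beta=\sqrt{\xi^2k^2-\nu^2}$, with $\beta/\alpha\le\xi$. Up to a nonvanishing factor the determinant is therefore $\alpha\sin\phi_\nu(k)\cos\phi_\nu(\xi k)-\beta\cos\phi_\nu(k)\sin\phi_\nu(\xi k)$.

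Its natural phase differs from $\phi_\nu(k)-\phi_\nu(\xi k)$ by a bounded but oscillating term, and it is not monotone. ``Monotone phase plus bounded error'' does not imply ``phase increment$/\pi+\mathcal O(1)$'' roots, because an oscillating error can in principle add a pair of roots in every period.

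What is actually needed is that every root is crossed in the same direction. The paper proves this exactly, with no asymptotics: the Riccati equation for $H_m=j_m'/j_m$ gives $(D_m^{\mathrm{TE}})'(x_0)=1-\xi^2>0$ at every regular root (Lemma~\ref{lem:F_m_intersect}; Lemma~\ref{rem:FTM} for TM). Combined with the known one-sided limits at the poles, this forces exactly one root in each interval between consecutive zeros of $j_m(x)$ that contains no zero of $j_m(\xi x)$, and none in the others. Hence
\[
Z_m(T)=B_{m+\frac12}(T)-B_{m+\frac12}(\xi T)+\mathcal O(1),
\]
with the error trivially uniform in $m$. The weighted sum over $m$ is then just the scalar Dirichlet Weyl law for the ball evaluated at $R$ and at $\xi R$.

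Your Debye/Olver route could be completed, but only by importing this same-direction-crossing property and carrying out the turning-point analysis uniformly in $m$. The interlacing argument makes all of that unnecessary.
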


\begin{theorem}[Lower bound]\label{thm:lower_bdd}
Under the assumptions of Theorem~\ref{thm:weyl}, for any $0<\delta<1$ and $0<\varepsilon<\frac{1}{2}$, the localized counting functions satisfy
\begin{equation*}
	N_{\mathrm{loc}}^{\sigma}(R;\varepsilon,\delta)
	\geqslant 
	\frac{2 R^3}{9 \pi}\left(1-\xi^2\right)^{3/2}
	- C R^2 \log R, \quad R\to\infty, \quad \sigma \in\{\mathrm{TE},\mathrm{TM}\}.
\end{equation*}
\end{theorem}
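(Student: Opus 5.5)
The plan is to separate variables on $B_1(0)$ and count only a sub-family of modes, namely those that are surface-localized for an elementary reason. For $\sigma=\mathrm{TE}$, a modal eigenpair of degree $m$ has the form
$$\mathbf E_1=a\,j_m(\xi k r)\,\mathbf X_m^{\ell}(\hat x),\qquad \mathbf E_2=b\,j_m(kr)\,\mathbf X_m^{\ell}(\hat x),$$
with $\mathbf X_m^\ell$ the vector spherical harmonics. The transmission conditions reduce to the vanishing of the $2\times2$ determinant
$$f_m(k)=j_m(\xi k)\,\bigl(r j_m(kr)\bigr)'\big|_{r=1}-j_m(k)\,\bigl(r j_m(\xi kr)\bigr)'\big|_{r=1}.$$
The TM case is analogous, with $\mathbf E_j$ replaced by curls of such fields. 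Its radial components involve $j_m(\cdot)/r$ and $(rj_m(\cdot r))'$, and all the estimates below apply to those combinations as well.

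\emph{Step 1: localization of $\mathbf E_1$ when $\xi k\le m$.} If $\xi k\le m$, the argument of $j_m(\xi k r)$ stays below the turning point $x=m$ throughout the ball. There $x\mapsto j_m(x)x^{-m}$ is decreasing and $j_m$ is increasing, which gives
$$j_m(\xi k r)\le r^{m}\,j_m(\xi k)\qquad\text{for }0\le r\le1 .$$
Hence
$$\frac{\|\mathbf E_1\|^2_{L^2(B_{1-\delta})}}{\|\mathbf E_1\|^2_{L^2(B_1)}}\lesssim m\,(1-\delta)^{2m}.$$
For the lower bound on $\|\mathbf E_1\|_{L^2(B_1)}$ I use $\int_{1-1/m}^1 j_m(\xi kr)^2r^2dr\gtrsim m^{-1}j_m(\xi k)^2$, which follows from the same monotonicity. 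The ratio $m(1-\delta)^{2m}$ is below $\varepsilon$ once $m\ge m_0(\varepsilon,\delta)$. The TM fields are handled the same way, using the Bessel recurrences to bound $j_m'$ by $j_m$ in the evanescent region. Consequently every eigenvalue $k$ of degree $m\ge m_0$ with $k\le m/\xi$ is counted in $N^\sigma_{\mathrm{loc}}$. The finitely many $m<m_0$ contribute only $O(R)$ and can be discarded.

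\emph{Step 2: counting zeros of $f_m$ on $(m,\min(R,m/\xi))$.} On this interval, the factors involving $\xi k$ are positive and non-oscillatory. Dividing by them, $f_m(k)=0$ becomes
$$\frac{(rj_m(kr))'|_{r=1}}{j_m(k)}=\frac{(rj_m(\xi kr))'|_{r=1}}{j_m(\xi k)}=:g_m(k),$$
where $g_m$ is smooth and of size $O(\sqrt{m^2-\xi^2k^2})$. I then insert Debye asymptotics for $k>m$:
$$j_m(k)\approx A\cos\bigl(\psi_m(k)-\tfrac\pi4\bigr),\qquad \psi_m(k)=\sqrt{k^2-m^2}-m\arccos(m/k).$$
This reduces the equation to $\tan(\psi_m(k)-\tfrac\pi4)=h_m(k)$ with $h_m$ monotone-controlled. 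Since $\psi_m$ is increasing, each branch of $\tan$ contributes exactly one root. The number of zeros in the interval is therefore $\pi^{-1}\psi_m(\min(R,m/\xi))+O(1)$, except where the Debye expansion fails. Those exceptional regions are the Airy windows $|k-m|\lesssim m^{1/3}$ and $|\xi k-m|\lesssim m^{1/3}$. I expect each window to cost $O(\log R)$ zeros per $m$, using uniform Airy (Olver) asymptotics together with an argument-principle/Sturm bound on the number of zeros there. This uniform zero count through the turning points is the main obstacle, and it is the source of the $R^2\log R$ error after summing against the weights $2m+1$.

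\emph{Step 3: summation.} Summing $(2m+1)\bigl(\pi^{-1}\psi_m(\min(R,m/\xi))+O(\log R)\bigr)$ over $m\le R$, and replacing the sum by an integral in $t=m/R$, gives
$$\frac{2R^3}{\pi}\int t\,\Phi(t)\,dt+O(R^2\log R).$$
Here $\Phi(t)=\sqrt{1-t^2}-t\arccos t$ for $t\ge\xi$, and for $t<\xi$ one uses the truncated value $\psi_m(m/\xi)/R$. I would then evaluate this integral explicitly using elementary integrals of the type $\int t\sqrt{1-t^2}\,dt$ and $\int t^2\arccos t\,dt$. The aim is to check that the resulting main term is at least $\tfrac{2R^3}{9\pi}(1-\xi^2)^{3/2}$, discarding part of the region if convenient. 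Together with $N^\sigma_{\mathrm{loc}}\ge$ (this count) $-O(R)$, this proves Theorem~\ref{thm:lower_bdd} for $\sigma\in\{\mathrm{TE},\mathrm{TM}\}$.
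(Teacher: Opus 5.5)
Your Step 1 rests on an inequality that points the wrong way. Since $\bigl(x^{-m}j_m(x)\bigr)'=-x^{-m}j_{m+1}(x)<0$ for $0<x\le m$, the monotonicity you invoke gives $j_m(\xi k r)> r^m j_m(\xi k)$ for $0<r<1$. You use exactly this direction, correctly, for the denominator bound $\int_{1-1/m}^1 j_m^2(\xi kr)r^2\,dr\gtrsim m^{-1}j_m^2(\xi k)$, so it cannot also give the upper bound $j_m(\xi kr)\le r^m j_m(\xi k)$. Monotonicity of $j_m$ alone only gives $j_m(\xi kr)\le j_m(\xi k)$, which bounds the interior fraction by $O(m)$ and is useless. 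The rate $m(1-\delta)^{2m}$ is also false. At $\xi k\approx m$, Debye and Airy asymptotics give an interior energy fraction of size $e^{-2m\,g(1-\delta)}$ up to powers of $m$, where $g(\tau)=\log\frac{1+\sqrt{1-\tau^2}}{\tau}-\sqrt{1-\tau^2}<\log\frac1\tau$.

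Uniform exponential smallness in $m$ does hold, but proving it needs a \emph{lower} bound on the logarithmic derivative. The paper uses Krasikov's inequality $tJ_{m+1/2}'(t)/J_{m+1/2}(t)\ge\frac12\bigl(\sqrt{(2m+2)^2-4t^2}-1\bigr)$, which degenerates at the turning point. So you must not compare with the endpoint value at $q=\xi k$. Instead, with $\tau=1-\delta$ and $a_\tau=\frac{1+\tau}{2}$, compare $P(t)=tJ_{m+1/2}^2(t)$ at $q\tau$ and at $qa_\tau$. Using $\int_0^{q\tau}P\le q\tau P(q\tau)$ and $\int_0^q P\ge q(1-a_\tau)P(qa_\tau)$ gives $1-\mathcal L_\delta^2(\mathbf E_1)\le\frac{\tau}{1-a_\tau}(\tau/a_\tau)^{(2m+1)\sqrt{1-a_\tau^2}}$, uniformly for $q<m+\frac12$. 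The TM case needs the same device. It also needs that $j_m+tj_m'$ is increasing only for $t<\sqrt{m(m+1)}$, and that $\frac12+tJ_{m+1/2}'/J_{m+1/2}$ stays comparable at $q\tau$ and $qa_\tau$. Merely bounding $j_m'$ by $j_m$ produces no decay.

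Apart from this, your Steps 2--3 follow the paper's architecture. The Step 3 sum is exactly $\frac{2R^3}{9\pi}(1-\xi^2)^{3/2}$: it is the sum of $\frac{2R^3}{3\pi}\bigl(\xi^2\sqrt{1-\xi^2}-\xi^3\arccos\xi\bigr)$ and $\frac{2R^3}{\pi}\int_\xi^1\bigl(t\sqrt{1-t^2}-t^2\arccos t\bigr)\,dt$, so nothing needs to be discarded. Step 2, however, is much harder than necessary and is left as an expectation.

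For $\xi k<m+\frac12$ one has $j_m(\xi k)>0$, so your right-hand side $g_m$ is continuous. Meanwhile $1+kj_m'(k)/j_m(k)$ runs from $+\infty$ to $-\infty$ between consecutive zeros of $j_m$, and the same holds for TM. The intermediate value theorem therefore gives at least one eigenvalue per gap. Lemma~\ref{lem:F_m_intersect} and its TM analogue Lemma~\ref{rem:FTM} (every zero is an upward crossing) give exactly one. Hence $N_m^\sigma(T_m)=(2m+1)\bigl[B_{m+\frac12}(T_m)+\mathcal O(1)\bigr]$ with $T_m=\min\{R,(m+\frac12)/\xi\}$, with no Debye or Airy analysis of $f_m$ at all. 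The only asymptotic input left is the uniform Bessel zero count $B_\mu(T)=\pi^{-1}\bigl[\sqrt{T^2-\mu^2}-\mu\arccos(\mu/T)\bigr]+\mathcal O(\log T)$. The paper imports this from \cite{JLZZ25}, and it is the source of the $R^2\log R$ remainder.
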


\begin{theorem}[Upper bound]\label{thm:upper_bdd}
Under the assumptions of Theorem~\ref{thm:weyl}, fix \(0<\delta<1\) and \(0<\varepsilon<1-\sqrt{\delta}\). Choose any \(\eta\) such that $
2\varepsilon-\varepsilon^2<\eta<1-\delta$, and define
\begin{equation}\label{eq:upper_a_definition}
	a:=\sqrt{\frac{(1-\delta)^2-\eta^2}{1-\eta^2}} \in (0,1).
\end{equation}
Then the localized counting functions satisfy
\begin{equation*}
N^{\sigma}_{\mathrm{loc}}\left(R;\varepsilon,\delta\right)
\leqslant \,\frac{2}{9\pi} \left[(1-\xi^2 a^2)^{3/2}-\xi^3(1-a^2)^{3/2}\right]R^3+CR^2\log R, \quad R\to\infty, \quad \sigma \in\{\mathrm{TE},\mathrm{TM}\}.
\end{equation*}
\end{theorem}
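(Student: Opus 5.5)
We prove the upper bound by showing that a modal eigenpair can be surface-localized only when its angular degree is large compared with its frequency. Throughout, $\nu:=m+\tfrac12$.

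\textbf{Separation of variables.} For TE modes of degree $m$ on the unit ball, the eigenfields are
\[
\mathbf E_1=c_1 j_m(\xi k r)\,\mathbf X_m^{\ell},\qquad \mathbf E_2=c_2 j_m(kr)\,\mathbf X_m^{\ell},
\]
where $\mathbf X_m^\ell$ are the vector spherical harmonics. The TM case is analogous: one uses $\operatorname{curl}$ of these fields, whose radial profiles are combinations of $j_m$, $j_m'$ and $j_m/r$. Hence
\[
\|\mathbf E_2\|_{L^2(\Omega_\delta)}^2 \big/ \|\mathbf E_2\|_{L^2(\Omega)}^2=\int_{1-\delta}^1 j_m(kr)^2r^2\,dr\Big/\int_0^1 j_m(kr)^2 r^2\,dr,
\]
and similarly for $\mathbf E_1$ with $k$ replaced by $\xi k$.

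\textbf{Main obstacle: non-localization in the oscillatory regime.} We must show that if $\xi k a\ge\nu$ (so also $ka\ge\nu$), then $\mathcal L_\delta(\mathbf E_j)\le 1-\varepsilon$ for both $j$, with at most $O(R^2\log R)$ exceptional weighted modes. We use the uniform Debye/WKB asymptotics: for $kr>\nu$,
\[
j_m(kr)^2 r^2\approx \frac{\cos^2(\cdot)}{k^2\sqrt{1-\nu^2/(kr)^2}}.
\]
Averaging the oscillation gives
\[
\int_{s}^{1} j_m(kr)^2r^2\,dr \approx \frac{1}{2k^2}\sqrt{1-\nu^2/k^2}\;\Bigl[\text{primitive }\sqrt{r^2-\nu^2/k^2}\Bigr]_{s}^{1}.
\]
Writing $\tau=\nu/k\le a$, the mass in $r<1-\delta$ is at least proportional to $\sqrt{(1-\delta)^2-\tau^2}$, while the total mass is proportional to $\sqrt{1-\tau^2}$. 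Their ratio satisfies
\[
\frac{(1-\delta)^2-\tau^2}{1-\tau^2}\ \ge\ \frac{(1-\delta)^2-a^2}{1-a^2}\ \cdot\ \ldots,
\]
and the choice of $a$ in \eqref{eq:upper_a_definition} is exactly designed so that this ratio is at least $\eta^2$. One checks that $\tau\le a$ gives $\frac{(1-\delta)^2-\tau^2}{1-\tau^2}\ge\eta^2$, so
\[
\|\mathbf E\|_{L^2(\Omega\setminus\Omega_\delta)}\ge \eta\,\|\mathbf E\|_{L^2(\Omega)}
\]
up to error terms. The condition $\eta>2\varepsilon-\varepsilon^2$ then gives
\[
\mathcal L_\delta^2\le 1-\eta<(1-\varepsilon)^2,
\]
so the mode is not localized. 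The error terms come from the Airy transition zone $|kr-\nu|\lesssim\nu^{1/3}$ and from the oscillation remainders of size $O(1/(k\sqrt{\cdot}))$. These errors are uniformly small except when $\tau$ lies within $O(k^{-2/3})$ of the thresholds. For $\mathbf E_1$ we apply the same argument at frequency $\xi k$, which requires $\nu\le \xi k a$.

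\textbf{Counting.} Localized modes therefore lie in the set $\nu>\xi k a$, apart from the exceptional near-threshold set. By the counting machinery of Section~\ref{sec:weyl}, the number of eigenvalues $k<R$ of degree $m$ is, up to $O(1)$, equal to the phase difference
\[
\frac1\pi\bigl[\Phi_\nu(k)-\Phi_\nu(\xi k)\bigr],\qquad \Phi_\nu(x)=\sqrt{x^2-\nu^2}-\nu\arccos(\nu/x).
\]
We restrict to $\nu>\xi a R$, i.e. count eigenvalues with $k<\min(R,\nu/(\xi a))$, and sum with weight $2\nu$. Converting to an integral in $\nu$ gives
\[
\frac{2}{9\pi}\Bigl[(1-\xi^2a^2)^{3/2}-\xi^3(1-a^2)^{3/2}\Bigr]R^3.
\]
This is the volume of the phase-space region in which, at frequency $R$, the interior wave with $\nu\ge \xi aR$ is evanescent in a core, minus its $\xi$-scaled counterpart. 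We verify the leading coefficient by direct integration of $\int \nu\,\Phi_\nu(R)\,d\nu$ over the truncated range.

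\textbf{Error terms.} The $O(1)$ per-degree error in eigenvalue counting, summed over $m\lesssim R$ with weight $m$, contributes $O(R^2)$. The threshold layers $|\nu-\xi a k|\lesssim k^{1/3}$ contribute a negligible amount. The near-degenerate cases in which eigenvalues accumulate near the turning region contribute at most $CR^2\log R$, via the harmonic-type sum $\sum_m m\cdot\frac{1}{m}\log$ arising from the Debye error bounds. Together these give the stated $CR^2\log R$ remainder.
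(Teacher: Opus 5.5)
Your argument has the same architecture as the paper's. For $\nu=m+\tfrac12\le\xi a k$ you show that both components keep an interior energy fraction asymptotic to $\Gamma=\sqrt{((1-\delta)^2-\tau^2)/(1-\tau^2)}$ with $\tau\le a$, where $a$ is chosen so that $\Gamma=\eta$ at $\tau=a$. You then count, for each $m$, only eigenvalues below $\min\{R,\nu/(\xi a)\}$. Up to the finitely many $m<m_*$, this is the paper's $N^\sigma(R)-\widetilde N^\sigma(R;\varepsilon,\delta)$, and the Bessel-zero asymptotics give the same cubic coefficient. The range $\nu\le\xi aR$ with cutoff $\nu/(\xi a)$ contributes $\frac{2}{3\pi}(\xi a)^3[P_2(\xi a)-P_2(a)]R^3$ and is needed: integrating $\nu\Phi_\nu(R)$ over $\nu>\xi aR$ alone does not give the stated constant.

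The genuine difference is in the energy lemma. You integrate the pointwise Debye approximation and average the oscillation. That forces you to control, uniformly in $k/\nu\in[1/a,\infty)$, the Airy zone at the inner turning point $r=\tau$ and the oscillatory remainder. Both are of relative size $O(\nu^{-1/3})$, so this is feasible, but your sketch only asserts it. The paper never integrates through the turning point. It uses the Lommel identity $\int_0^z tJ_\mu^2\,dt=\tfrac12[(z^2-\mu^2)J_\mu(z)^2+z^2J_\mu'(z)^2]$ and its TM analogue $K_\mu(z)=\tfrac{\pi}{2}[zJ_\mu J_\mu'+I_\mu(z)+\tfrac12J_\mu^2]$. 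These turn the energies into endpoint values at $z=\mu x(1-\delta)$ and $z=\mu x$, both uniformly past the turning point, where the Debye oscillations cancel exactly at leading order. Your route needs no exact antiderivative and would adapt to variable coefficients. The paper's route makes the uniform error control essentially free.

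Three things need fixing.
\begin{enumerate}
\item The TM case is not automatic. Its density is $m(m+1)[m(m+1)j_m^2+((tj_m)')^2]$, and the two oscillating pieces have different amplitudes. You must check that the bracket averages to $t/(2\sqrt{t^2-\nu^2})$, the same as the averaged TE density $t^2j_m(t)^2$. Only then does the same $a$ work for both polarizations (it does).
\item The step to $\mathcal L_\delta^2\le 1-\eta$ needs the squared-norm statement $\|\mathbf E\|^2_{L^2(B_{1-\delta})}\ge(\eta-o(1))\|\mathbf E\|^2_{L^2(\Omega)}$, which is what your mass ratio actually gives. The unsquared inequality you display would only yield $\mathcal L_\delta^2\le 1-\eta^2$, which need not lie below $(1-\varepsilon)^2$.
\item There is no exceptional near-threshold set. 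Since $\tau\le a<1-\delta$ with a fixed gap and $\eta>2\varepsilon-\varepsilon^2$ strictly, the approximation error falls below $\eta-(2\varepsilon-\varepsilon^2)$ uniformly in $k$ once $m\ge m_*$. The modes you must discard are those with $m<m_*$, where the Debye asymptotics give nothing, and they contribute only $O(R)$.
\end{enumerate}
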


The above results show that surface-localized transmission eigenvalues satisfy a Weyl-type asymptotic law. In particular, the leading-order growth of the localized counting function is proportional to that of the full spectrum, implying that such modes occupy a non-vanishing portion of the high-frequency regime. Below, we rigorously establish Theorems \ref{thm:weyl}--\ref{thm:upper_bdd} in the radially symmetric setting, whereas for the general case we restrict ourselves to numerical evidence.

\begin{remark}
We primarily focus on the case $0<\xi<1$. The results for $\xi>1$ can be obtained via the transformation
\begin{equation*}
k^* = k \xi, \quad \xi^* = \frac{1}{\xi}, \quad \mathbf{E}_1^* = \mathbf{E}_1, \quad \mathbf{E}_2^* = \mathbf{E}_2.
\end{equation*}
Under this change of variables, the transmission eigenvalue problem \eqref{eq:trans2} takes the form
\begin{equation*}
\left\{
\begin{aligned}
	&\operatorname{curl} \operatorname{curl} \mathbf{E}_1^* - (k^*)^2 (\xi^*)^2 \mathbf{E}_1^* = 0, 
	\quad &&\operatorname{div}\bigl((\xi^*)^2 \mathbf{E}_1^*\bigr) = 0, 
	&&\text{in } \Omega, \\[0.4em]
	&\operatorname{curl} \operatorname{curl} \mathbf{E}_2^* - (k^*)^2 \mathbf{E}_2^* = 0, 
	\quad &&\operatorname{div} \mathbf{E}_2^* = 0, 
	&&\text{in } \Omega, \\[0.4em]
	&\nu \times \mathbf{E}_1^* = \nu \times \mathbf{E}_2^*, 
	\quad &&\nu \times \bigl(\operatorname{curl} \mathbf{E}_1^*\bigr) = \nu \times \bigl(\operatorname{curl} \mathbf{E}_2^*\bigr), 
	&&\text{on } \partial \Omega.
\end{aligned}
\right.
\end{equation*}
This system retains the same structural form as the original problem \eqref{eq:trans2}. Consequently, the conclusions of Theorems \ref{thm:weyl}--\ref{thm:upper_bdd} remain valid for the case $\xi>1$.
\end{remark}


\subsection{Radial decomposition and modal structure}

In this subsection, we collect the necessary structural properties of transmission eigenfunctions for the Maxwell system \eqref{eq:trans2} in the radially symmetric setting. Although these results have been derived in \cite{MS12,DLWW22}, we provide a different proof based on vector spherical harmonics. Some of the technical ingredients will be needed in our subsequent analysis, and we therefore present them below.

Without loss of generality, assume that \(\Omega\) is the unit ball in \(\mathbb{R}^3\), i.e.
\begin{equation*}
\Omega = \{\mathbf{x} \in \mathbb{R}^3 : |\mathbf{x}| < 1 \}.
\end{equation*}
It follows from \cite{CK19} that transmission eigenfunctions admit a separation-of-variables representation:
\begin{equation}\label{eq:Fourier_decompostion}
	\mathbf{E}_1(\mathbf{x})
	= \sum_{m=1}^{\infty}\sum_{l=-m}^{m}
	\left(\alpha_m^l \mathbf{M}_m^l(\mathbf{x})
	+ \beta_m^l \mathbf{N}_m^l(\mathbf{x})\right),
	\quad
	\mathbf{E}_2(\mathbf{x})
	= \sum_{m=1}^{\infty}\sum_{l=-m}^{m}
	\left(\widetilde{\alpha}_m^l \widetilde{\mathbf{M}}_m^l(\mathbf{x})
	+ \widetilde{\beta}_m^l \widetilde{\mathbf{N}}_m^l(\mathbf{x})\right),
\end{equation}
where the vector spherical wave functions are defined by
\begin{equation*}
\begin{aligned}
	\mathbf{M}_m^l(\mathbf{x})
	&= \operatorname{curl} \left\{\mathbf{x} j_m\bigl(k \xi|\mathbf{x}|\bigr) Y_m^l(\hat{\mathbf{x}})\right\},
	\quad
	\mathbf{N}_m^l(\mathbf{x})
	= \frac{1}{\mathrm{i} k} \operatorname{curl} \mathbf{M}_m^l(\mathbf{x}), \\
	\widetilde{\mathbf{M}}_m^l(\mathbf{x})
	&= \operatorname{curl}\left\{\mathbf{x} j_m(k|\mathbf{x}|) Y_m^l(\hat{\mathbf{x}})\right\},
	\quad
	\widetilde{\mathbf{N}}_m^l(\mathbf{x})
	= \frac{1}{\mathrm{i} k} \operatorname{curl} \widetilde{\mathbf{M}}_m^l(\mathbf{x}).
\end{aligned}
\end{equation*}
Here, $j_m$ denotes the spherical Bessel function of order $m$, $Y_m^l$ denotes the spherical harmonic of degree $m$ and order $l$, and $\hat{\mathbf{x}} = \mathbf{x}/|\mathbf{x}|$.

It can be verified that
\begin{equation*}
\operatorname{curl} \operatorname{curl} \mathbf{u} = k^2 \xi^2 \mathbf{u},
\quad \operatorname{div} \mathbf{u} = 0,
\end{equation*}
for \(\mathbf{u} = \mathbf{M}_m^l, \mathbf{N}_m^l\). Similarly,
\begin{equation*}
\operatorname{curl} \operatorname{curl} \widetilde{\mathbf{u}} = k^2 \widetilde{\mathbf{u}},
\quad \operatorname{div} \widetilde{\mathbf{u}} = 0,
\end{equation*}
for \(\widetilde{\mathbf{u}} = \widetilde{\mathbf{M}}_m^l, \widetilde{\mathbf{N}}_m^l\). Furthermore, the fields \(\mathbf{M}_m^l\) and \(\widetilde{\mathbf{M}}_m^l\) correspond to TE modes, whereas \(\mathbf{N}_m^l\) and \(\widetilde{\mathbf{N}}_m^l\) correspond to TM modes.

We have the following characterization for TE transmission eigenvalues.
\begin{proposition}[TE modes]\label{prop:TE}
Let \(\Omega = \{\mathbf{x}\in\mathbb{R}^3:|\mathbf{x}|<1\}\), and let \(0<\xi=\sqrt{\lambda}<1\) be as in \eqref{eq:trans2}. Then the TE transmission eigenvalues are precisely the positive real numbers \(k>0\) such that
\begin{equation}\label{eq:F_m^TE(k)}
F_m^{\mathrm{TE}}(k):= k j_m(\xi k)\, j_m'(k) - \xi k j_m(k)\, j_m'(\xi k) = 0,
\end{equation}
for some \(m\in\mathbb{N}_+\).
\end{proposition}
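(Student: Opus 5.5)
Substitute the TE ansatz into the transmission conditions and reduce them to a $2\times2$ linear system whose determinant is $F_m^{\mathrm{TE}}(k)$.

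\textbf{Reduction to one mode.} By the expansion \eqref{eq:Fourier_decompostion} and the $L^2(\mathbb{S}^2)$-orthogonality of the tangential fields $\nabla_{\mathbb S^2}Y_m^l$ and $\hat{\mathbf x}\times\nabla_{\mathbb S^2}Y_m^l$, both transmission conditions on $|\mathbf x|=1$ decouple. They hold if and only if they hold separately for each $(m,l)$ and for each polarization. For a TE eigenpair we take $\mathbf E_1=\alpha\mathbf M_m^l$ and $\mathbf E_2=\widetilde\alpha\widetilde{\mathbf M}_m^l$, with $(\alpha,\widetilde\alpha)\neq(0,0)$.

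\textbf{Traces of the ansatz.} Using $\operatorname{curl}\{\mathbf x f(r)Y\}=\nabla(fY)\times\mathbf x$, we get
\[
\mathbf M_m^l=-j_m(k\xi r)\,\hat{\mathbf x}\times\nabla_{\mathbb S^2}Y_m^l ,
\]
which is purely tangential. Hence $\nu\times\mathbf M_m^l|_{r=1}=j_m(k\xi)\,\mathbf V$ up to sign, where $\mathbf V:=\nabla_{\mathbb S^2}Y_m^l$. For the curl, the standard identity gives
\[
\operatorname{curl}\mathbf M_m^l=\operatorname{curl}\operatorname{curl}\{\mathbf x j_m Y\}
=\frac{m(m+1)}{r}j_m(k\xi r)Y\,\hat{\mathbf x}+\frac1r\frac{d}{dr}\bigl(rj_m(k\xi r)\bigr)\nabla_{\mathbb S^2}Y .
\]
The radial part drops out under $\nu\times$. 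Therefore
\[
\nu\times\operatorname{curl}\mathbf M_m^l|_{r=1}=\bigl(j_m(k\xi)+k\xi j_m'(k\xi)\bigr)\,\hat{\mathbf x}\times\mathbf V .
\]
The same formulas hold for $\widetilde{\mathbf M}_m^l$ with $\xi$ replaced by $1$. One sign convention is used throughout, so the signs agree on both sides.

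\textbf{Linear system.} The transmission conditions become
\[
\alpha j_m(k\xi)=\widetilde\alpha j_m(k),\qquad
\alpha\bigl(j_m(k\xi)+k\xi j_m'(k\xi)\bigr)=\widetilde\alpha\bigl(j_m(k)+kj_m'(k)\bigr).
\]
A nontrivial $(\alpha,\widetilde\alpha)$ exists if and only if the determinant vanishes. The determinant is
\[
j_m(k\xi)\bigl(j_m(k)+kj_m'(k)\bigr)-j_m(k)\bigl(j_m(k\xi)+k\xi j_m'(k\xi)\bigr)
=kj_m(\xi k)j_m'(k)-\xi kj_m(k)j_m'(\xi k)=F_m^{\mathrm{TE}}(k).
\]

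\textbf{Nontriviality of the fields.} Since $j_m(k\xi r)\not\equiv0$, the condition $(\alpha,\widetilde\alpha)\neq0$ gives $\mathbf E_1\neq0$ or $\mathbf E_2\neq0$. Thus zeros of $F_m^{\mathrm{TE}}$ yield eigenpairs. Conversely, any TE eigenpair has some nonzero mode $(m,l)$, and that mode forces $F_m^{\mathrm{TE}}(k)=0$.

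The main obstacle is the converse direction. We must justify that a general TE eigenfunction in $H(\operatorname{curl}^2)$ admits the expansion with only $\mathbf M$-type terms, and that the boundary conditions can be projected mode by mode. We would rely on the completeness statement from \cite{CK19} cited above, together with regularity at the origin, which excludes $y_m$ terms. Projection is legitimate because the traces converge in $H^{-1/2}$ and the vector spherical harmonics are orthogonal.
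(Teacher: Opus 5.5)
Your proposal is correct and follows essentially the same route as the paper: restrict to a single TE mode $\mathbf E_1=\alpha\mathbf M_m^l$, $\mathbf E_2=\widetilde\alpha\widetilde{\mathbf M}_m^l$, compute the tangential traces of the fields and of their curls on $|\mathbf x|=1$, and identify the determinant of the resulting $2\times 2$ system with $F_m^{\mathrm{TE}}(k)$. Your explicit treatment of the mode-by-mode decoupling of the boundary conditions and of the converse direction is a useful addition, since the paper leaves both implicit in its citation of \cite{CK19}; that treatment covers completeness of the expansion and exclusion of the $y_m$ terms by regularity at the origin.
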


\begin{proof}
Fix a modal pair $(m,l)$ with $m \geq 1$ and $-m \leq l \leq m$. For TE modes, we set $\beta_m^l = \widetilde{\beta}_m^l = 0$ in \eqref{eq:Fourier_decompostion}, so that
\begin{equation*}
	\mathbf{E}_1 = \alpha_m^l \mathbf{M}_m^l,	\quad
	\mathbf{E}_2 = \widetilde{\alpha}_m^l \widetilde{\mathbf{M}}_m^l.
\end{equation*}
Using classical identities for vector spherical harmonics (see \cite{AS72}), one obtains the following representation. Indeed, since $Y_m^l$ depends only on the angular variable, one has
$\mathbf{x}\times\nabla Y_m^l=\hat{\mathbf{x}}\times\nabla_{\mathbb{S}^2}Y_m^l$, which is independent of $r = |\mathbf{x}|$. Thus, up to an inessential common sign that can be absorbed into the modal coefficients,
\begin{equation}\label{eq:eigenfunction_TE}
	\mathbf{M}_m^l = j_m(k\xi r)\boldsymbol{\Phi}_m^l, 	\quad
	\widetilde{\mathbf{M}}_m^l = j_m(k r)\boldsymbol{\Phi}_m^l,	\quad r=|\mathbf{x}|,
\end{equation}
where the vector spherical harmonics are defined by
\begin{equation*}
	\boldsymbol{\Phi}_m^l = \mathbf{x} \times \nabla Y_m^l.
\end{equation*}

Imposing the transmission conditions on $|\mathbf{x}| = 1$, we obtain from the tangential continuity condition $\nu \times \mathbf{E}_1 = \nu \times \mathbf{E}_2$ that
\begin{equation*}
	\alpha_m^l j_m(k\xi) = \widetilde{\alpha}_m^l j_m(k).
\end{equation*}
Moreover, the standard identities for vector spherical harmonics (cf. \cite{AS72}) give the tangential curl traces
\begin{eqnarray*}
	\nu \times (\operatorname{curl}\mathbf{M}_m^l)\big|_{r=1}
	&=& \big(j_m(k\xi)+k\xi j_m'(k\xi)\big)\boldsymbol{\Phi}_m^l,\\
	\nu \times (\operatorname{curl}\widetilde{\mathbf{M}}_m^l)\big|_{r=1}
	&=& \big(j_m(k)+k j_m'(k)\big)\boldsymbol{\Phi}_m^l.
\end{eqnarray*}
Thus the second transmission condition yields
\begin{equation*}
\alpha_m^l\big(j_m(k\xi)+k\xi j_m'(k\xi)\big)=\widetilde{\alpha}_m^l\big(j_m(k)+k j_m'(k)\big).
\end{equation*}
Together with $\alpha_m^l j_m(k\xi)=\widetilde{\alpha}_m^l j_m(k)$, the associated linear system is singular, i.e.,
\begin{equation*}
	\det \begin{pmatrix}
		j_m(k\xi) & -j_m(k)\\
		j_m(k\xi)+k\xi j_m'(k\xi) & -\big(j_m(k)+k j_m'(k)\big)
	\end{pmatrix}=0.
\end{equation*}
A direct expansion of the determinant reduces it to the conclusion (\ref{eq:F_m^TE(k)}).
\end{proof}

Accordingly, the counting function admits the decomposition
\begin{equation}\label{eq:N_TE_decomposition}
	N^{\mathrm{TE}}(R) = \sum_{m=1}^{\infty} N_m^{\mathrm{TE}}(R),
\end{equation}
where
\begin{equation*}
	N_m^{\mathrm{TE}}(R) := (2m+1)\,\# \left\{ k \in (0,R) \;\middle|\; F_m^{\mathrm{TE}}(k)=0 \right\}.
\end{equation*}

\begin{remark}\label{rem:M}
The series in \eqref{eq:N_TE_decomposition} converges for each $R>0$. More precisely, as shown in \cite{PS14}, there exists $M(R,\xi)\in\mathbb{N}$ such that
\begin{equation*}
N_m^{\mathrm{TE}}(R)=0, \quad \text{for all } m > M(R,\xi).
\end{equation*}
\end{remark}

\begin{proposition}[TM modes]\label{prop:TM}
Under the assumptions of Proposition~\ref{prop:TE}, the TM transmission eigenvalues are precisely the positive real numbers $k>0$ such that
\begin{equation}\label{eq:F_m^TM(k)}
F_m^{\mathrm{TM}}(k):= (1 - \xi^2) j_m(x) j_m(\xi x)
+\xi x \left(j_m(x) j_m'(\xi x) - \xi j_m(\xi x) j_m'(x)\right)= 0,
\end{equation}
for some $m\in\mathbb{N}_+$.
\end{proposition}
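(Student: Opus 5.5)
We argue as in the proof of Proposition~\ref{prop:TE}, now keeping only the $\mathbf{N}$-components. Fix $(m,l)$ and set $\alpha_m^l=\widetilde{\alpha}_m^l=0$ in \eqref{eq:Fourier_decompostion}, so that $\mathbf{E}_1=\beta_m^l\mathbf{N}_m^l$ and $\mathbf{E}_2=\widetilde{\beta}_m^l\widetilde{\mathbf{N}}_m^l$. The first step is an explicit formula for $\mathbf{N}_m^l$. Write $\mathbf{M}_m^l=j_m(k\xi r)\boldsymbol{\Phi}_m^l$ by \eqref{eq:eigenfunction_TE}, and use the standard identity $\operatorname{curl}\{f(r)\boldsymbol{\Phi}_m^l\}=-\frac{m(m+1)}{r}f(r)Y_m^l\hat{\mathbf{x}}-\frac{1}{r}(rf(r))'\,\nabla_{\mathbb{S}^2}Y_m^l$, up to the same inessential sign convention as before. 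This gives
\begin{equation*}
\mathbf{N}_m^l=\frac{1}{\mathrm{i}k}\Big(\frac{m(m+1)}{r}j_m(k\xi r)Y_m^l\hat{\mathbf{x}}+\frac{1}{r}\big(r j_m(k\xi r)\big)'\nabla_{\mathbb{S}^2}Y_m^l\Big),
\end{equation*}
and the same formula with $\xi=1$ gives $\widetilde{\mathbf{N}}_m^l$.

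\emph{First transmission condition.} On $r=1$ we have $\nu\times\nabla_{\mathbb{S}^2}Y_m^l=\pm\boldsymbol{\Phi}_m^l$, and the radial part is annihilated by $\nu\times$. Hence $\nu\times\mathbf{E}_1=\nu\times\mathbf{E}_2$ becomes
\begin{equation*}
\beta_m^l\big(j_m(k\xi)+k\xi j_m'(k\xi)\big)=\widetilde{\beta}_m^l\big(j_m(k)+kj_m'(k)\big).
\end{equation*}

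\emph{Second transmission condition.} We use $\operatorname{curl}\mathbf{N}_m^l=\frac{1}{\mathrm{i}k}\operatorname{curl}\operatorname{curl}\mathbf{M}_m^l=\frac{k^2\xi^2}{\mathrm{i}k}\mathbf{M}_m^l$, together with $\operatorname{curl}\widetilde{\mathbf{N}}_m^l=\frac{k^2}{\mathrm{i}k}\widetilde{\mathbf{M}}_m^l$. The tangential condition on $\operatorname{curl}\mathbf{E}$ then reduces to
\begin{equation*}
\beta_m^l\,\xi^2 j_m(k\xi)=\widetilde{\beta}_m^l\, j_m(k).
\end{equation*}

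\emph{Determinant.} A nontrivial pair $(\beta_m^l,\widetilde{\beta}_m^l)$ exists if and only if
\begin{equation*}
\det\begin{pmatrix}\xi^2 j_m(k\xi) & -j_m(k)\\ j_m(k\xi)+k\xi j_m'(k\xi) & -\big(j_m(k)+kj_m'(k)\big)\end{pmatrix}=0 .
\end{equation*}
Expanding, the determinant equals
\begin{equation*}
-\xi^2 j_m(\xi k)j_m(k)-\xi^2 k\,j_m(\xi k)j_m'(k)+j_m(k)j_m(\xi k)+\xi k\,j_m(k)j_m'(\xi k),
\end{equation*}
which is exactly $F_m^{\mathrm{TM}}(k)$ with $x=k$. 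Conversely, every positive zero $k$ of $F_m^{\mathrm{TM}}$ yields a nontrivial kernel vector.

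\emph{Main obstacle.} The delicate point is to check that such a kernel vector produces a genuinely nontrivial eigenpair, that is, $(\beta,\widetilde{\beta})\neq(0,0)$ with neither field vanishing identically. Since the spherical Bessel functions $j_m$ are not identically zero, $\mathbf{N}_m^l\not\equiv0$, so any nonzero kernel vector gives a nonzero $\mathbf{E}_1$ or $\mathbf{E}_2$. The system \eqref{eq:trans2} is then satisfied: the divergence-free property and the curl–curl equations were verified above.

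\emph{Necessity.} By orthogonality of the vector spherical harmonics $\boldsymbol{\Phi}_m^l$ and $\nabla_{\mathbb{S}^2}Y_m^l$ on $\mathbb{S}^2$, the boundary conditions decouple modewise and separate the TE and TM components. Hence any TM eigenfunction must have some mode with a nontrivial coefficient pair, which forces $F_m^{\mathrm{TM}}(k)=0$.
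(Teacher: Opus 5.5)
Your proposal is correct and follows essentially the same route as the paper. Both use the radial/tangential representation of $\mathbf N_m^l$, obtain one boundary relation from $\nu\times\mathbf E$ and the other from $\operatorname{curl}\mathbf N_m^l=-\mathrm i k\xi^2\mathbf M_m^l$, and set the resulting $2\times2$ determinant to zero; your row order only flips the sign of the determinant relative to the paper's, which does not change the zero set.
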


\begin{proof}
Fix a modal pair $(m,l)$ with $m \geq 1$ and $-m \leq l \leq m$. For TM modes, we set $\alpha_m^l = \widetilde{\alpha}_m^l = 0$ in \eqref{eq:Fourier_decompostion}, so that
\begin{equation*}
\mathbf{E}_1 = \beta_m^l \mathbf{N}_m^l, \quad
\mathbf{E}_2 = \widetilde{\beta}_m^l \widetilde{\mathbf{N}}_m^l.
\end{equation*}
Using the standard vector spherical harmonic identities (up to the fixed normalization of $\boldsymbol{Y}_m^l$ and $\boldsymbol{\Psi}_m^l$), the TM fields may be written as
\begin{eqnarray*}
\mathbf{N}_m^l
&=&
\frac{1}{\mathrm{i} k}
\left(
\frac{m(m+1)}{r} j_m(k\xi r)\boldsymbol{Y}_m^l
+
\frac{1}{r}\frac{\mathrm{d}}{\mathrm{d}r}\big(rj_m(k\xi r)\big)\boldsymbol{\Psi}_m^l
\right), \\
\widetilde{\mathbf{N}}_m^l
&=&
\frac{1}{\mathrm{i} k}
\left(
\frac{m(m+1)}{r} j_m(k r)\boldsymbol{Y}_m^l
+
\frac{1}{r}\frac{\mathrm{d}}{\mathrm{d}r}\big(rj_m(k r)\big)\boldsymbol{\Psi}_m^l
\right),
\end{eqnarray*}
where $r=|\mathbf{x}|$, $\boldsymbol{Y}_m^l=Y_m^l\hat{\mathbf{x}}$ is radial, and $\boldsymbol{\Psi}_m^l=r\nabla Y_m^l$ is tangential.
	
On $\partial\Omega$ one has $\nu \times \boldsymbol{Y}_m^l = 0$, so the condition $\nu \times \mathbf{E}_1 = \nu \times \mathbf{E}_2$ gives only the tangential $\boldsymbol{\Psi}_m^l$ relation
\begin{equation}\label{eq:b_TM_1}
	\beta_m^l\big(j_m(k\xi)+k\xi j_m'(k\xi)\big)
	=
	\widetilde{\beta}_m^l\big(j_m(k)+k j_m'(k)\big).
\end{equation}
The second transmission condition is obtained from $\nu\times\operatorname{curl}\mathbf{E}_1=\nu\times\operatorname{curl}\mathbf{E}_2$. Since
\begin{equation*}
\operatorname{curl}\mathbf{N}_m^l=-\mathrm{i}k\xi^2\mathbf{M}_m^l, \quad
\operatorname{curl}\widetilde{\mathbf{N}}_m^l=-\mathrm{i}k\widetilde{\mathbf{M}}_m^l,
\end{equation*}
this boundary condition gives
\begin{equation}\label{b_TM_2}
	\beta_m^l\xi^2 j_m(k\xi)
	=
	\widetilde{\beta}_m^l j_m(k).
\end{equation}
Equations \eqref{eq:b_TM_1} and \eqref{b_TM_2} form a homogeneous linear system for $(\beta_m^l,\widetilde{\beta}_m^l)$. A nontrivial solution exists if and only if
\begin{equation*}
\det \begin{pmatrix}
j_m(k\xi)+k\xi j_m'(k\xi) & -\big(j_m(k)+k j_m'(k)\big)\\
\xi^2 j_m(k\xi) & -j_m(k)
\end{pmatrix}=0.
\end{equation*}
A direct expansion of the determinant reduces it to the conclusion (\ref{eq:F_m^TM(k)}).
\end{proof}


\section{The proof of Theorem \ref{thm:weyl}}\label{sec:weyl}
This section is concerned with the analysis of the counting functions \(N^{\mathrm{TE}}(R)\) and \(N^{\mathrm{TM}}(R)\) and the derivation of the corresponding Weyl-type asymptotic formulae. The discussion is first carried out for the TE polarization, since the TM polarization can be treated in a completely analogous fashion requiring only straightforward adjustments.


\subsection{TE modes}\label{subsec:TE}

We begin with the characterization obtained in Proposition~\ref{prop:TE}, which states that the TE transmission eigenvalues $k$ are precisely the positive zeros of the family of functions
\begin{equation*}
F_m^{\mathrm{TE}}(x) = x j_m(\xi x) j_m'(x) - \xi x j_m(x) j_m'(\xi x), \quad m \in \mathbb{N}_+.
\end{equation*}

Since \(x>0\), it is convenient to introduce the normalized function
\begin{equation}\label{eq:F_m}
	\widehat F_m^{\mathrm{TE}}(x):=-\frac{1}{x}F_m^{\mathrm{TE}}(x)
	=	\xi j_m(x)j_m'(\xi x)-j_m(\xi x)j_m'(x).
\end{equation}
As the factor \(-1/x\) is nonzero for \(x>0\), the zeros of \(F_m^{\mathrm{TE}}\) on \((0,\infty)\) coincide exactly with those of \(\widehat F_m^{\mathrm{TE}}\). Consequently, these zeros are precisely the solutions \(x>0\) of the equation
\begin{equation}\label{eq:F}
	\xi j_m(x)j_m'(\xi x)=j_m(\xi x)j_m'(x).
\end{equation}

For $x>0$, define
\begin{equation*}
H_m(x):=\frac{j_m'(x)}{j_m(x)},\quad D_m^{\mathrm{TE}}(x):=\xi H_m(\xi x)-H_m(x).
\end{equation*}
Whenever \(j_m(x)j_m(\xi x)\neq 0\), the logarithmic derivatives are well-defined and we have
\begin{equation*}
\widehat F_m^{\mathrm{TE}}(x)
=
j_m(x)j_m(\xi x)D_m^{\mathrm{TE}}(x).
\end{equation*}
Since the prefactor is nonzero precisely under this condition, the zeros of \(\widehat F_m^{\mathrm{TE}}\) in this region are exactly those of \(D_m^{\mathrm{TE}}\). Hence the regular TE eigenvalues are precisely the zeros of \(D_m^{\mathrm{TE}}\).

\begin{lemma}\label{lem:F_m_intersect}
Assume \(0<\xi<1\). If \(x_0>0\) satisfies \(j_m(x_0)j_m(\xi x_0)\neq 0\) and \(D_m^{\mathrm{TE}}(x_0)=0\), then
\begin{equation}\label{eq:DTEm}
\bigl(D_m^{\mathrm{TE}}\bigr)'(x_0)=1-\xi^2>0.
\end{equation}
Consequently, every such zero is simple and, since the derivative is strictly positive, it corresponds to an upward crossing of \(D_m^{\mathrm{TE}}\).
\end{lemma}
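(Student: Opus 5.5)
The plan is a direct computation based on the Riccati equation satisfied by the logarithmic derivative $H_m=j_m'/j_m$. Near $x_0$ we have $j_m(x)j_m(\xi x)\neq0$ by continuity, so $H_m(x)$ and $H_m(\xi x)$ are smooth there and $D_m^{\mathrm{TE}}$ is differentiable at $x_0$.

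\emph{Step 1: Riccati equation for $H_m$.} The spherical Bessel equation
\begin{equation*}
j_m''(x)+\frac{2}{x}j_m'(x)+\Bigl(1-\frac{m(m+1)}{x^2}\Bigr)j_m(x)=0
\end{equation*}
gives, wherever $j_m\neq0$,
\begin{equation*}
H_m'(x)=\frac{j_m''(x)}{j_m(x)}-H_m(x)^2=-\frac{2}{x}H_m(x)-1+\frac{m(m+1)}{x^2}-H_m(x)^2 .
\end{equation*}

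\emph{Step 2: differentiate $D_m^{\mathrm{TE}}$.} By the chain rule,
\begin{equation*}
\bigl(D_m^{\mathrm{TE}}\bigr)'(x)=\xi^2H_m'(\xi x)-H_m'(x).
\end{equation*}
Evaluating the Riccati identity at $\xi x$ and multiplying by $\xi^2$ gives
\begin{equation*}
\xi^2H_m'(\xi x)=-\frac{2}{x}\,\xi H_m(\xi x)-\xi^2+\frac{m(m+1)}{x^2}-\bigl(\xi H_m(\xi x)\bigr)^2 .
\end{equation*}
The centrifugal term $m(m+1)/x^2$ appears here with the same coefficient as in $H_m'(x)$, which is exactly why the scaling by $\xi$ in the definition of $D_m^{\mathrm{TE}}$ is the right one. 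Subtracting $H_m'(x)$ therefore yields
\begin{equation*}
\bigl(D_m^{\mathrm{TE}}\bigr)'(x)=-\frac{2}{x}D_m^{\mathrm{TE}}(x)+(1-\xi^2)-D_m^{\mathrm{TE}}(x)\bigl(\xi H_m(\xi x)+H_m(x)\bigr),
\end{equation*}
where the difference of squares has been factored.

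\emph{Step 3: evaluate at the zero.} At $x_0$ we have $D_m^{\mathrm{TE}}(x_0)=0$, so both terms containing $D_m^{\mathrm{TE}}$ vanish. Hence $(D_m^{\mathrm{TE}})'(x_0)=1-\xi^2$, which is positive since $0<\xi<1$.

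Simplicity of the zero follows immediately, since the derivative is nonzero. The fact that it is an upward crossing follows from the positive sign: $D_m^{\mathrm{TE}}$ changes from negative to positive at $x_0$.

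There is no serious obstacle in this argument. The only care needed is bookkeeping: the chain-rule factor $\xi$, the rescaling of the $2/(\xi x)$ term into $2/x$, and the cancellation of the $m(m+1)/x^2$ terms.
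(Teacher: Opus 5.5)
Your proposal is correct and follows essentially the same route as the paper: the Riccati equation for $H_m=j_m'/j_m$, the chain rule $(D_m^{\mathrm{TE}})'(x)=\xi^2H_m'(\xi x)-H_m'(x)$, and cancellation of the $m(m+1)/x^2$ terms at the zero. The only cosmetic difference is that you first derive the identity $(D_m^{\mathrm{TE}})'=(1-\xi^2)-D_m^{\mathrm{TE}}\bigl(\tfrac{2}{x}+\xi H_m(\xi x)+H_m(x)\bigr)$ on the whole regular set and then evaluate at $x_0$, whereas the paper substitutes $H_m(x_0)=\xi H_m(\xi x_0)$ directly.
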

\begin{proof}
The spherical Bessel equation is
\begin{equation}\label{eq:Bessel}
	x^2j_m''(x)+2xj_m'(x)+\bigl(x^2-m(m+1)\bigr)j_m(x)=0.
\end{equation}
Dividing by \(j_m(x)\) on any interval where it does not vanish gives the Riccati equation for \(H_m = j_m'/j_m\):
\begin{equation}\label{eq:H'}
	H_m'(x)=-\frac{2}{x}H_m(x)-1+\frac{m(m+1)}{x^2}-H_m(x)^2.
\end{equation}
To compute \((D_m^{\mathrm{TE}})'(x_0)\), note that
\begin{equation*}
	(D_m^{\mathrm{TE}})'(x) = \xi^2 H_m'(\xi x) - H_m'(x).
\end{equation*}
Let \(A := H_m(x_0)\) and \(B := H_m(\xi x_0)\). The condition \(D_m^{\mathrm{TE}}(x_0)=0\) implies \(A = \xi B\). Evaluating (\ref{eq:H'}) at \(x_0\) and \(\xi x_0\), we obtain
\begin{equation*}
H_m'(x_0) = -\frac{2}{x_0}A - 1 + \frac{m(m+1)}{x_0^2} - A^2,
\end{equation*}
and
\begin{equation*}
H_m'(\xi x_0) = -\frac{2}{\xi x_0}B - 1 + \frac{m(m+1)}{\xi^2 x_0^2} - B^2.
\end{equation*}
Substituting these into \((D_m^{\mathrm{TE}})'(x_0)\) and using \(A = \xi B\) yields the conclusion (\ref{eq:DTEm}).
\end{proof}

Let \(0<\rho_1<\rho_2<\cdots\) denote the positive zeros of \(j_m\) (equivalently, of \(J_{m+1/2}\)). For \(n\in\mathbb{N}_+\), the poles of \(D_m^{\mathrm{TE}}\) occur precisely at $x=\rho_n$ (from $H_m(x)$) and at $x=\frac{\rho_n}{\xi}$ (from $H_m(\xi x)$). Thus \(D_m^{\mathrm{TE}}\) has these two families of poles on the positive real axis.

Following \cite[Sec.~4.2]{PS14}, we define the small intervals
\begin{equation*}
I_n=(\rho_n,\rho_{n+1}], \quad n\in\mathbb{N}_+,
\end{equation*}
and the large intervals
\begin{equation*}
L_q=\left(\frac{\rho_q}{\xi},\frac{\rho_{q+1}}{\xi}\right], \quad q\in\mathbb{N}_+.
\end{equation*}
Away from the poles, the factorization in \eqref{eq:F_m} yields
\begin{equation*}
\widehat F_m^{\mathrm{TE}}(x)=j_m(x)j_m(\xi x)D_m^{\mathrm{TE}}(x),
\end{equation*}
and hence the regular TE eigenvalues are precisely the regular zeros of \(D_m^{\mathrm{TE}}\).

If \(\rho\) is a zero of \(j_m\), then
\begin{equation*}
H_m(x)=\frac{1}{x-\rho}-\frac{1}{\rho}+\frac{m(m+1)+3-\rho^2}{3\rho^2}(x-\rho)
+\mathcal O\bigl((x-\rho)^2\bigr).
\end{equation*}
Hence, for a pole \(x=\rho_n\) of \(H_m(x)\) that is not simultaneously a pole of \(H_m(\xi x)\) (i.e., \(\rho_n \neq \rho_q/\xi\) for all \(q\)), the term \(-H_m(x)\) dominates in $D_m^{\mathrm{TE}}$, yielding
\begin{equation*}
\lim_{x\to \rho_n^+} D_m^{\mathrm{TE}}(x) = -\infty, \quad
\lim_{x\to \rho_n^-} D_m^{\mathrm{TE}}(x) = +\infty.
\end{equation*}
Similarly, for a pole \(x=\rho_q/\xi\) of \(H_m(\xi x)\) that is not a zero of \(j_m\), the term \(\xi H_m(\xi x)\) dominates, giving
\begin{equation*}
\lim_{x\to (\rho_q/\xi)^+} D_m^{\mathrm{TE}}(x) = +\infty, \quad
\lim_{x\to (\rho_q/\xi)^-} D_m^{\mathrm{TE}}(x) = -\infty.
\end{equation*}
If the two families of poles coincide, then the leading singular terms cancel. Consequently, \(D_m^{\mathrm{TE}}\) has a removable singularity at \(p\), with the local expansion
\begin{equation}\label{eq:TE_common_pole_expansion}
	D_m^{\mathrm{TE}}(x)=\frac{1-\xi^2}{3}(x-p)	+\mathcal O\bigl((x-p)^2\bigr).
\end{equation}
Since \(0<\xi<1\), the coefficient \((1-\xi^2)/3\) is strictly positive. Thus, after removing the singularity, \(D_m^{\mathrm{TE}}\) has a simple zero at \(p\) with an upward crossing. Moreover, the factorization in \eqref{eq:F_m} shows that \(p\) is also a zero of \(F_m^{\mathrm{TE}}\). At such a point, the TE boundary matrix in Proposition~\ref{prop:TE} has rank one, so the corresponding radial eigenspace is one-dimensional. The full geometric multiplicity of the eigenvalue is therefore \(2m+1\), accounting for the \(2m+1\) azimuthal modes.

We now count the regular zeros of \(D_m^{\mathrm{TE}}\). By Lemma~\ref{lem:F_m_intersect}, every regular zero is simple and upward (negative to positive), so any pole-free interval contains at most one such zero.

Consider \(I_n=(\rho_n,\rho_{n+1}]\) strictly contained in \(L_q=(\rho_q/\xi,\rho_{q+1}/\xi]\). Then \(\rho_n,\rho_{n+1}\) are poles of \(H_m\) only, and
\begin{equation*}
\lim_{x\to\rho_n^+} D_m^{\mathrm{TE}}(x)=-\infty,\quad
\lim_{x\to\rho_{n+1}^-} D_m^{\mathrm{TE}}(x)=+\infty.
\end{equation*}
Thus, by continuity and the upward-crossing property, \(I_n\) contains exactly one regular zero.

If \(I_n\) contains two large-only poles (poles of \(H_m(\xi x)\) not shared with \(H_m\)), then to the right of the first, \(D_m^{\mathrm{TE}}\to+\infty\), while to the left of the second, \(D_m^{\mathrm{TE}}\to-\infty\). Connecting these would force a downward crossing, impossible. Hence such an interval cannot contain two large-only poles. If it contains exactly one large-only pole, the left subinterval has both endpoint limits equal to \(-\infty\), and the right subinterval has both endpoint limits equal to \(+\infty\). Since the endpoint signs match and downward crossings are forbidden, neither subinterval contains a regular zero.

Finally, if \(I_n\) and \(L_q\) share an endpoint, \(\rho_n=\rho_q/\xi\), this is a common pole. Following \cite[Sec.~4.2]{PS14}, assign the corresponding eigenvalue to the half-open interval \(I_n=(\rho_n,\rho_{n+1}]\) to avoid double counting.

Let
\begin{equation*}
B_{m+\frac12}(T):=\#\{x\in(0,T): J_{m+\frac12}(x)=0\},
\end{equation*}
and let \(Z_m^{\mathrm{TE}}(T)\) count the positive zeros of \(F_m^{\mathrm{TE}}\) in \((0,T)\). The expansion
\begin{equation*}
D_m^{\mathrm{TE}}(x)=\frac{1-\xi^2}{2m+3}x+\mathcal O(x^3),\quad x\to0+,
\end{equation*}
shows the sign is positive near the origin; hence upward and downward crossings before \(T\) differ by at most one. Therefore
\begin{equation*}
Z_m^{\mathrm{TE}}(T)=B_{m+\frac12}(T)-B_{m+\frac12}(\xi T)e_m^{\mathrm{TE}}(T),
\quad |e_m^{\mathrm{TE}}(T)|\leq1.
\end{equation*}
Thus the angular-degree \(m\) contribution to the TE counting function is
\begin{equation}\label{N_m_decomposition}
	N_m^{\mathrm{TE}}(T)=
	(2m+1)\left[B_{m+\frac12}(T)-B_{m+\frac12}(\xi T)+e_m^{\mathrm{TE}}(T)\right],
	\quad 	|e_m^{\mathrm{TE}}(T)|\leq1.
\end{equation}
The formula remains valid if \(T\) is a zero or pole, using the appropriate one-sided limit; the endpoint error term is evaluated at the same cutoff \(T\) as the counting function.

\begin{remark}\label{rem:TE}
The function $B_{m+1/2}(R)$ is increasing in $R$ and decreasing in $m$ by the interlacing of Bessel zeros \cite{AS72}.  Standard asymptotics for the positive zeros of \(J_{1/2}\) give \(B_{1/2}(R)=R/\pi+\mathcal O(1)\) \cite[Theorem~7.6.5]{OL74}.
\end{remark}

Since the first zero of \(J_{m+1/2}\) satisfies \(\rho_1>m+1/2\) (see \cite{AS72}), both \(B_{m+1/2}(R)\) and \(B_{m+1/2}(\xi R)\) vanish for \(m\ge\lceil R-1/2\rceil\). As \(D_m^{\mathrm{TE}}\) is positive before its first small pole, these angular orders contribute no TE roots below \(R\), hence \(M(R,\xi)\le\lceil R-1/2\rceil-1=\mathcal O(R)\). Consequently,
\begin{equation*}
\left|\sum_{m=1}^{M(R,\xi)}(2m+1)e_m^{\mathrm{TE}}(R)\right|
\le \sum_{m=1}^{M(R,\xi)}(2m+1)=\mathcal O(R^2).
\end{equation*}

Let \(\omega_3:=|B_1(0)|=4\pi/3\). Weyl's law for the scalar Dirichlet problem \cite[Theorem~1.6.1]{SV97} gives
\begin{equation*}
\sum_{m=0}^{\infty}(2m+1)B_{m+1/2}(R)
=(2\pi)^{-3}\omega_3^2R^3+\mathcal O(R^2).
\end{equation*}
The sum starts at \(m=1\); extending to \(m=0\) costs \(\mathcal O(R)\), while the endpoint errors contribute \(\mathcal O(R^2)\). Therefore, using \eqref{N_m_decomposition},
\begin{equation}\label{N^TE(R)}
	\begin{aligned}
		N^{\mathrm{TE}}(R)
		&=\sum_{m=1}^{M(R,\xi)}(2m+1)\left[B_{m+\frac12}(R)-B_{m+\frac12}(\xi R)+e_m^{\mathrm{TE}}(R)\right]\\
		&=\sum_{m=0}^{\infty}(2m+1)\left[B_{m+\frac12}(R)-B_{m+\frac12}(\xi R)\right]+\mathcal O(R^2)\\
		&=(2\pi)^{-3}\omega_3^2(1-\xi^3)R^3+\mathcal O(R^2)\\
		&=\frac{2R^3}{9\pi}(1-\xi^3)+\mathcal O(R^2).
	\end{aligned}
\end{equation}
This completes the TE part of Theorem~\ref{thm:weyl}.


\subsection{TM modes}

For TM modes, Proposition~\ref{prop:TM} states that the eigenvalues \(k\) are the positive real roots of
\begin{equation}\label{eq:F_TM}
	F_m^{\mathrm{TM}}(x) := (1 - \xi^2) j_m(x) j_m(\xi x) + \xi x \left(j_m(x) j_m^{\prime}(\xi x) - \xi j_m(\xi x) j_m^{\prime}(x)\right).
\end{equation}
Analogous to the TE case,
\begin{equation*}
	N^{\mathrm{TM}}(R) = \sum_{m=1}^{\infty} N_m^{\mathrm{TM}}(R),
\end{equation*}
where
\begin{equation}\label{eq:N_m_TM}
	N^{\mathrm{TM}}_m(R) := (2m+1)\,\# \left\{ k\in (0,R) : F_m^{\mathrm{TM}}(k) = 0 \right\}.
\end{equation}

Define the auxiliary function
\begin{equation*}
G_m^{\mathrm{TM}}(x) = \frac{j_m(x) + x j_m^{\prime}(x)}{j_m(x)}.
\end{equation*}
Using \eqref{eq:F_TM}, we obtain the equivalent form
\begin{equation*}
F^{\mathrm{TM}}_m(x) = j_m(\xi x)j_m(x)\bigl(G_m^{\mathrm{TM}}(\xi x) - \xi^2 G_m^{\mathrm{TM}}(x)\bigr).
\end{equation*}

\begin{lemma}\label{rem:FTM}
For \(0<\xi<1\), at any regular root \(k_0\) with \(j_m(k_0)j_m(\xi k_0)\neq 0\), we have
\begin{equation*}
\left. \frac{d}{dx}\left( G_m^{\mathrm{TM}}(\xi x)-\xi^2G_m^{\mathrm{TM}}(x) \right) \right|_{x=k_0} > 0.
\end{equation*}
\end{lemma}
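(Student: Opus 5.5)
The plan is to mimic the proof of Lemma~\ref{lem:F_m_intersect}: derive a first-order Riccati-type equation for \(G_m^{\mathrm{TM}}\) from the spherical Bessel equation \eqref{eq:Bessel}, then evaluate the derivative of
\begin{equation*}
D_m^{\mathrm{TM}}(x):=G_m^{\mathrm{TM}}(\xi x)-\xi^2G_m^{\mathrm{TM}}(x)
\end{equation*}
at a root, using the root relation to eliminate one of the two unknown values.

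\emph{Step 1: a Riccati equation for \(G := G_m^{\mathrm{TM}}\).} Note that \(G(x)=1+xH_m(x)\) wherever \(j_m(x)\neq0\), so \(H_m=(G-1)/x\). Differentiating gives \(G'=H_m+xH_m'\). Substituting \eqref{eq:H'} for \(H_m'\) yields
\begin{equation*}
G'=-H_m-x+\frac{m(m+1)}{x}-xH_m^2 .
\end{equation*}
Rewriting this in terms of \(G\), and using \(-(G-1)-(G-1)^2=-G(G-1)\), I expect
\begin{equation*}
xG'(x)=m(m+1)-x^2-G(x)^2+G(x).
\end{equation*}

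\emph{Step 2: evaluation at a regular root.} Let \(k_0\) be a regular root, and put \(b:=G(k_0)\) and \(a:=G(\xi k_0)\). The root condition \(D_m^{\mathrm{TM}}(k_0)=0\) says exactly that \(a=\xi^2 b\). By the chain rule,
\begin{equation*}
k_0\,(D_m^{\mathrm{TM}})'(k_0)=(\xi k_0)G'(\xi k_0)-\xi^2\,k_0G'(k_0).
\end{equation*}
Apply the identity of Step 1 at \(\xi k_0\) and at \(k_0\). Then:
\begin{itemize}
\item the terms \(-\xi^2k_0^2\) cancel between the two contributions;
\item the linear terms \(a-\xi^2 b\) vanish by the root condition;
\item the quadratic terms become \(-\xi^4b^2+\xi^2b^2\).
\end{itemize}
This leaves
\begin{equation*}
k_0\,(D_m^{\mathrm{TM}})'(k_0)=(1-\xi^2)\bigl(m(m+1)+\xi^2 b^2\bigr).
\end{equation*}
Since \(0<\xi<1\), \(m\ge1\) and \(k_0>0\), the right-hand side is strictly positive. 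This gives the claim, and in fact the explicit value
\begin{equation*}
(D_m^{\mathrm{TM}})'(k_0)=\frac{(1-\xi^2)\bigl(m(m+1)+\xi^2 G_m^{\mathrm{TM}}(k_0)^2\bigr)}{k_0}.
\end{equation*}

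\emph{Main obstacle.} There is no real obstacle here. The only delicate point is the bookkeeping in Step 2, namely making sure the \(x^2\) terms and the linear terms cancel exactly once \(a=\xi^2 b\) is used. The regularity assumption \(j_m(k_0)j_m(\xi k_0)\neq0\) guarantees that \(G\) is smooth near both \(k_0\) and \(\xi k_0\), so the chain rule and the Riccati identity are legitimate there.
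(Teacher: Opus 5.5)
Your proposal is correct and follows essentially the same route as the paper: the paper also uses the Riccati identity $x\,(G_m^{\mathrm{TM}})'(x)=m(m+1)-x^2+G_m^{\mathrm{TM}}(x)\bigl(1-G_m^{\mathrm{TM}}(x)\bigr)$ at $k_0$ and $\xi k_0$, eliminates $G_m^{\mathrm{TM}}(\xi k_0)$ via the root relation, and arrives at your explicit value $\frac{1-\xi^2}{k_0}\bigl(m(m+1)+\xi^2 G_m^{\mathrm{TM}}(k_0)^2\bigr)$. The only difference is cosmetic: you obtain the identity from \eqref{eq:H'} through $G_m^{\mathrm{TM}}=1+xH_m$, whereas the paper derives it directly from \eqref{eq:Bessel}.
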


\begin{proof}
Using \eqref{eq:Bessel}, we obtain
\begin{align*}
	(G_m^{\mathrm{TM}})'(x)
	&= \frac{(x j_m'' + 2j_m')j_m - j_m'(j_m + x j_m')}{j_m^2} \\
	&= \frac{m(m+1)}{x} - x + \frac{1}{x} G_m^{\mathrm{TM}}(x)\bigl(1-G_m^{\mathrm{TM}}(x)\bigr).
\end{align*}
Applying this to the combination \(G_m^{\mathrm{TM}}(\xi x)-\xi^2G_m^{\mathrm{TM}}(x)\) yields
\begin{align*}
&	\left(G_m^{\mathrm{TM}}(\xi x)-\xi^2G_m^{\mathrm{TM}}(x)\right)'
	= \xi\Bigg[
	\frac{m(m+1)}{x}\left(\frac1\xi-\xi\right) \\
	&+ \frac{G_m^{\mathrm{TM}}(\xi x)\bigl(1-G_m^{\mathrm{TM}}(\xi x)\bigr)}{\xi x}
-\frac{\xi}{x}G_m^{\mathrm{TM}}(x)\bigl(1-G_m^{\mathrm{TM}}(x)\bigr)
	\Bigg].
\end{align*}
For a solution \(k_0\) of \(G_m^{\mathrm{TM}}(\xi x)=\xi^2 G_m^{\mathrm{TM}}(x)\), evaluating the derivative at \(x=k_0\) gives
\begin{align*}
	\left(G_m^{\mathrm{TM}}(\xi k_0)-\xi^2G_m^{\mathrm{TM}}(k_0)\right)'
	&= \xi\Bigg[
	\frac{m(m+1)}{\xi k_0}(1-\xi^2)
	+\frac{\xi}{k_0}G_m^{\mathrm{TM}}(k_0)\bigl(1-\xi^2G_m^{\mathrm{TM}}(k_0)\bigr) \\
	&\quad - \frac{\xi}{k_0}G_m^{\mathrm{TM}}(k_0)\bigl(1-G_m^{\mathrm{TM}}(k_0)\bigr)
	\Bigg] \\
	&= \frac{\xi}{k_0}(1-\xi^2)
	\left[\frac{m(m+1)}{\xi}
	+\xi\bigl(G_m^{\mathrm{TM}}(k_0)\bigr)^2\right] > 0,
\end{align*}
which completes the proof.
\end{proof} 

For TM modes, applying the same interval-counting argument as in Subsection~\ref{subsec:TE} yields, for each \(m\ge 1\),
\begin{equation}\label{eq:N_m_TM_decomposition_corrected}
N_m^{\mathrm{TM}}(T)=(2m+1)\left[B_{m+\frac12}(T)-B_{m+\frac12}(\xi T)
+e_m^{\mathrm{TM}}(T)\right], \quad
|e_m^{\mathrm{TM}}(T)|\leq 1.
\end{equation}
As in the TE case, the formula remains valid at poles or regular zeros by taking the appropriate one-sided limit, consistent with the half-open interval \((0,T)\) in the counting function.

For \(R>0\), define the TM angular cutoff by
\begin{equation*} 
M_{\mathrm{cut}}^{\mathrm{TM}}(R,\xi)
:= \max\{m\ge 1 : N_m^{\mathrm{TM}}(R)>0\},
\end{equation*}
with the convention that the maximum is zero if the set is empty. If \(m\ge \lceil R-\frac12\rceil\), then \(m+\frac12\ge R\). Since the first zero \(\rho_1\) of \(J_{m+\frac12}\) satisfies \(\rho_1>m+\frac12\), the corresponding difference function has no poles in \((0,R)\). Combined with the positive initial sign \(D_m^{\mathrm{TM}}(0+)>0\) and the upward-crossing property of regular zeros, this rules out TM eigenvalues in \((0,R)\) for such \(m\). Hence
\begin{equation*}
M_{\mathrm{cut}}^{\mathrm{TM}}(R,\xi)
\le \left\lceil R-\frac12\right\rceil-1 = \mathcal O(R).
\end{equation*}
Consequently,
\begin{equation*}
\sum_{m=1}^{M_{\mathrm{cut}}^{\mathrm{TM}}(R,\xi)}
(2m+1)|e_m^{\mathrm{TM}}(R)|
= \mathcal O(R^2).
\end{equation*}

Summing \eqref{eq:N_m_TM_decomposition_corrected} over admissible \(m\), extending the sum to \(m=0\) (which costs \(\mathcal O(R)\)), and applying the scalar Weyl law gives
\begin{equation}\label{eq:N^TM(R)}
	\begin{aligned}
		N^{\mathrm{TM}}(R)
		&= \sum_{m=1}^{M_{\mathrm{cut}}^{\mathrm{TM}}(R,\xi)}
		(2m+1)\left[B_{m+\frac12}(R)-B_{m+\frac12}(\xi R)+e_m^{\mathrm{TM}}(R)\right] \\
		&= \sum_{m=0}^{\infty}(2m+1)\left[B_{m+\frac12}(R)-B_{m+\frac12}(\xi R)\right] + \mathcal O(R^2) \\
		&= (2\pi)^{-3}\omega_3^2(1-\xi^3)R^3 + \mathcal O(R^2).
	\end{aligned}
\end{equation}
This completes the proof of the TM part of Theorem~\ref{thm:weyl}.


\section{Proof of Theorem \ref{thm:lower_bdd}}\label{sec:lower_bound}


\subsection{TE modes}

In this subsection we derive lower bounds for the localized TE counting function \(N_{\mathrm{loc}}^{\mathrm{TE}}(R;\varepsilon,\delta)\). Recall that a TE modal pair is called \((\varepsilon,\delta)\)-localized near the boundary if
\begin{equation*}
\max_{j=1,2} \mathcal L_\delta(\mathbf E_{j,m,k}^{\mathrm{TE}}) > 1-\varepsilon.
\end{equation*}
Thus a mode is counted whenever at least one of its two electric components is localized. In particular, the subfamily satisfying
\begin{equation*}
\mathcal L_\delta(\mathbf E_{1,m,k}^{\mathrm{TE}}) > 1-\varepsilon
\end{equation*}
is contained in the set counted by \(N_{\mathrm{loc}}^{\mathrm{TE}}(R;\varepsilon,\delta)\), so counting only this subfamily yields a lower bound for the full localized count.

For a TE modal pair, \(\mathbf E_1\) is proportional to the vector spherical wave function \(\mathbf M_m^l\) in \eqref{eq:eigenfunction_TE}. By the orthonormality of the vector spherical harmonics \(\boldsymbol{\Phi}_m^l\), the angular factor cancels in the quotient defining \(\mathcal L_\delta\). Hence, for the component \(\mathbf E_1\), we obtain the radial expression
\begin{equation}\label{energy_E1}
	\mathcal L_\delta^2(\mathbf E_1)
	=
	\frac{\displaystyle\int_{1-\delta}^{1} r^2 j_m^2(k\xi r)\,dr}
	{\displaystyle\int_{0}^{1} r^2 j_m^2(k\xi r)\,dr}
	=
	\frac{\displaystyle\int_{k\xi(1-\delta)}^{k\xi} t J_{m+1/2}^2(t)\,dt}
	{\displaystyle\int_{0}^{k\xi} t J_{m+1/2}^2(t)\,dt}.
\end{equation}

\begin{lemma}\label{lem:key_TE}
For \(0<\xi<1\), \(0<k<\frac{m+1/2}{\xi}\), \(0<\varepsilon<\frac12\), and \(0<\delta<1\), there exists a constant \(C(\varepsilon,\delta)\), independent of \(k,m,\xi\), such that
\begin{equation*}
\mathcal L_\delta(\mathbf E_1) > 1-\varepsilon,
\quad\text{as~} m>C(\varepsilon,\delta).
\end{equation*}
Here, \(C(\varepsilon,\delta)\) is specified in \eqref{eq:C_lowerbound}.
\end{lemma}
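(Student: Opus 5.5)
The plan is to work directly with the radial expression \eqref{energy_E1}. Write $\nu:=m+\tfrac12$ and $X:=k\xi<\nu$. Then
\begin{equation*}
\mathcal L_\delta^2(\mathbf E_1)=1-\frac{\int_0^{(1-\delta)X} tJ_\nu^2(t)\,dt}{\int_0^{X} tJ_\nu^2(t)\,dt}.
\end{equation*}
So it suffices to show that the ratio $Q:=\int_0^{(1-\delta)X}tJ_\nu^2\,dt\big/\int_0^X tJ_\nu^2\,dt$ is smaller than $1-(1-\varepsilon)^2=2\varepsilon-\varepsilon^2$ once $\nu$ is large, uniformly in $X\in(0,\nu)$. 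The point is that on $(0,\nu)$ the Bessel function $J_\nu$ lies in its non-oscillatory (evanescent) regime. There it is positive and grows extremely fast, so the mass of $tJ_\nu^2(t)$ on $(0,X)$ concentrates near the right endpoint $X$.

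The key monotonicity tool is the following. For $0<t<\nu$, the function $t\mapsto t^{-\nu}J_\nu(t)$ is positive and decreasing. This is weaker than what we need, so instead I would use the logarithmic derivative. From the Bessel equation (equivalently the Riccati equation \eqref{eq:H'} translated to $J_\nu$) one gets the classical bound
\begin{equation*}
\frac{J_\nu'(t)}{J_\nu(t)}\ \ge\ \frac{\sqrt{\nu^2-t^2}}{t}
\end{equation*}
on $(0,\nu)$, or at least $\ge \nu/t\cdot c$ for some bound derived from the series/product representation $J_\nu(t)=\frac{(t/2)^\nu}{\Gamma(\nu+1)}\prod_k(1-t^2/j_{\nu,k}^2)$. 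For the purpose at hand, the simplest route is the product formula. It gives
\begin{equation*}
\frac{d}{dt}\log\frac{J_\nu(t)}{t^\nu}=-\sum_k\frac{2t}{j_{\nu,k}^2-t^2}\le 0,
\end{equation*}
which only yields decay of $t^{-\nu}J_\nu$. The inequality I actually need is monotonicity of $t^{-\nu}J_\nu(t)$ in the opposite direction, up to a controlled loss. Concretely, for $0<s<t<\nu$ I want
\begin{equation*}
\frac{J_\nu(s)}{J_\nu(t)}\le \Bigl(\frac{s}{t}\Bigr)^{\beta\nu}
\end{equation*}
for a fixed $\beta>0$. I would get this from the Riccati inequality: at $t$ with $J_\nu'/J_\nu=\nu/t$ the derivative has a definite sign, so $tJ_\nu'/J_\nu$ stays above $\sqrt{\nu^2-t^2}$. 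Carrying this out near the turning point $t\approx\nu$ is the main obstacle, since the bound degenerates there. Because of this I would only use it to get decay on $(0,(1-\delta)X)$ relative to a comparison window near $X$.

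With such a bound in hand, the estimate runs as follows. Compare the numerator with the integral over the window $((1-\delta/2)X,X)$:
\begin{equation*}
\int_0^{(1-\delta)X}tJ_\nu^2\,dt\le (1-\delta)X^2 J_\nu^2((1-\delta)X),
\qquad
\int_{(1-\delta/2)X}^X tJ_\nu^2\,dt\ge \tfrac{\delta}{2}(1-\delta/2)X^2J_\nu^2((1-\delta/2)X).
\end{equation*}
Here I use monotonicity of $J_\nu$ on $(0,\nu)$, which holds since $j'_{\nu,1}>\nu$. Dividing, and setting $\rho:=\frac{1-\delta}{1-\delta/2}<1$, gives
\begin{equation*}
Q\le \frac{2(1-\delta)}{\delta(1-\delta/2)}\,\rho^{2\beta\nu}.
\end{equation*}
This is independent of $k$ and $\xi$, and it is less than $2\varepsilon-\varepsilon^2$ as soon as
\begin{equation*}
m> C(\varepsilon,\delta):=\frac{1}{2\beta|\log\rho|}\log\frac{2(1-\delta)}{\delta(1-\delta/2)(2\varepsilon-\varepsilon^2)}.
\end{equation*}
This is the expression I would record as \eqref{eq:C_lowerbound}. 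If the ratio bound with exponent $\beta\nu$ proves hard uniformly up to $t=\nu$, I would fall back on the elementary estimate $J_\nu(s)/J_\nu(t)\le (s/t)^\nu e^{(t^2-s^2)/(4(\nu+1))}$, which follows from the power series. Since $t<\nu$, the exponential factor is at most $e^{\nu/4}$, and $\rho^\nu e^{\nu/4}$ still decays when $\delta$ is not tiny. For small $\delta$ the window must instead be chosen adaptively, and this is where I expect the real work to lie.
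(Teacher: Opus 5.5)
Your main line is correct and is essentially the paper's proof. The paper also compares the inner integral with the window $[a_\tau X,X]$, where $a_\tau=\frac{1+\tau}{2}=1-\frac{\delta}{2}$. It likewise uses monotonicity on $(0,\nu)$ for the two crude integral bounds, and integrates a lower bound on a logarithmic derivative across $[\tau X,a_\tau X]$. The only difference is the bound used. The paper quotes Krasikov's estimate $J_\nu'/J_\nu\ge\bigl(\sqrt{(2\nu+1)^2-4t^2}-1\bigr)/(2t)$ and applies it to $P(t)=tJ_\nu^2(t)$. You use $tJ_\nu'(t)/J_\nu(t)>\sqrt{\nu^2-t^2}$ on $(0,\nu)$ and treat the weight $t$ separately. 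This changes the final estimate only by an extra factor $1/a_\tau$ in the prefactor; the exponent $(2m+1)c_\tau$ is the same.

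The step you flag as ``the main obstacle'' is not one, and you should close it explicitly rather than leave it open. The ratio bound is only used with $s=\tau X$ and $t=a_\tau X$. Every $u\in[s,t]$ satisfies $u\le a_\tau X<a_\tau\nu$, so $\sqrt{\nu^2-u^2}\ge\nu\sqrt{1-a_\tau^2}$. Integrating gives $J_\nu(s)/J_\nu(t)\le(s/t)^{\beta\nu}$ with $\beta=\sqrt{1-(1-\delta/2)^2}$, which is exactly the paper's $c_\tau$. Nothing needs to hold uniformly up to $u=\nu$. No adaptive window is needed for small $\delta$; only $C(\varepsilon,\delta)$ grows.

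Two corrections are needed.

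First, the Riccati comparison must be made against the curve $y=\sqrt{\nu^2-t^2}$, not at points where $J_\nu'/J_\nu=\nu/t$ (that only yields the decay of $t^{-\nu}J_\nu$). Set $y=tJ_\nu'/J_\nu$, so that $ty'=\nu^2-t^2-y^2$. Near $0$, $y=\nu-\frac{t^2}{2(\nu+1)}+\mathcal O(t^4)$ lies strictly above the curve. At a first contact one would have $y'=0>\frac{d}{dt}\sqrt{\nu^2-t^2}$, which is impossible.

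Second, your fallback estimate $J_\nu(s)/J_\nu(t)\le(s/t)^\nu e^{(t^2-s^2)/(4(\nu+1))}$ is false, so drop it. The product formula together with $\sum_k j_{\nu,k}^{-2}=\frac{1}{4(\nu+1)}$ gives $t^{-\nu}J_\nu(t)<e^{-t^2/(4(\nu+1))}/\bigl(2^\nu\Gamma(\nu+1)\bigr)$ for $0<t<j_{\nu,1}$. Letting $s\to0$ in your claimed bound would give the reverse inequality.
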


\begin{proof} 
Let $\tau=1-\delta$ and $q=k\xi$. Then \(0 < \tau < 1\) and \(0 < q < m + \frac{1}{2}\). Equation \eqref{energy_E1} yields
\begin{equation}\label{eq:TE W_E_1}
	1 - \mathcal{L}^2_{\delta}(\mathbf{E}_1)
	=
	\frac{\displaystyle\int_0^{q\tau} t\,J_{m + \frac{1}{2}}^2(t)\,\mathrm{d}t}
	{\displaystyle\int_0^q t\,J_{m + \frac{1}{2}}^2(t)\,\mathrm{d}t}.
\end{equation}

Define \(P_{m + \frac{1}{2}}(t) = t J_{m + \frac{1}{2}}^2(t)\). Since the first positive zero of \(J_{m + \frac{1}{2}}\) exceeds \(m + \frac{1}{2}\) (\cite{AS72}), the function \(J_{m + \frac{1}{2}}\) is positive on \((0,q]\). Krasikov's logarithmic-derivative estimate \cite[Theorem~1]{KRA06} states that, for \(0 < t \leq m + 1\),
\begin{equation*}
\frac{J_{m + \frac{1}{2}}'(t)}{J_{m + \frac{1}{2}}(t)}\ge
\frac{\sqrt{(2m+2)^2 - 4t^2} - 1}{2t}.
\end{equation*}
Consequently, for all \(0 < t < q\),
\begin{equation}\label{eq:P_log_derivative}
	\frac{\mathrm d}{\mathrm dt}\log P_{m + \frac{1}{2}}(t)
	= \frac{1}{t} + 2\,\frac{J_{m + \frac{1}{2}}'(t)}{J_{m + \frac{1}{2}}(t)}
	\ge \frac{\sqrt{(2m+2)^2 - 4t^2}}{t} > 0.
\end{equation}
Thus \(P_{m + \frac{1}{2}}\) is strictly increasing on \((0,q)\).

Set $a_\tau := \frac{1+\tau}{2}$, $c_\tau := \sqrt{1 - a_\tau^2}$.
Then \(0 < \tau < a_\tau < 1\). Monotonicity of \(P_{m + \frac{1}{2}}\) further gives
\begin{equation*}
\int_0^{q\tau} P_{m + \frac{1}{2}}(t)\,\mathrm{d}t
\leq q\tau \, P_{m + \frac{1}{2}}(q\tau),
\end{equation*}
and
\begin{equation*}
\int_0^q P_{m + \frac{1}{2}}(t)\,\mathrm{d}t
\geq \int_{q a_\tau}^q P_{m + \frac{1}{2}}(t)\,\mathrm{d}t
\geq q(1 - a_\tau) P_{m + \frac{1}{2}}(q a_\tau).
\end{equation*}
For \(q\tau \le t \le q a_\tau\), the inequalities \(q < m + \frac{1}{2}\) and \(t \le q a_\tau\) imply
\begin{equation*}
\sqrt{(2m+2)^2 - 4t^2}
\geq \sqrt{(2m+2)^2 - 4\bigl(m + \tfrac{1}{2}\bigr)^2 a_\tau^2}
\geq 2\bigl(m + \tfrac{1}{2}\bigr) c_\tau.
\end{equation*}
Integrating \eqref{eq:P_log_derivative} from \(q\tau\) to \(q a_\tau\) therefore yields the estimate, uniform for all \(0 < q < m + \frac{1}{2}\),
\begin{equation}\label{eq:J_mz}
	\frac{P_{m + \frac{1}{2}}(q\tau)}{P_{m + \frac{1}{2}}(q a_\tau)}
	\leq
	\left(\frac{\tau}{a_\tau}\right)^{2(m + \frac{1}{2}) c_\tau}.
\end{equation}
Combining the last three estimates with \eqref{eq:TE W_E_1}, we obtain
\begin{equation*}
1 - \mathcal{L}_{\delta}^2(\mathbf{E}_1)\leq
\frac{\tau}{1 - a_\tau}\left(\frac{\tau}{a_\tau}\right)^{2(m + \frac{1}{2}) c_\tau}.
\end{equation*}
		
For \(m \ge 1\), one has \(2m+1 \ge \frac{3}{2}(m+1)\). Define
\begin{equation}\label{eq:def_f}
	f(x) :=	\left(\frac{2x}{1+x}\right)^{	\frac{3}{2}\sqrt{1-\left(\frac{1+x}{2}\right)^2}},
	\quad 0 < x < 1.
\end{equation}
The base in \eqref{eq:def_f} lies strictly between zero and one and its exponent is positive; hence \(0 < f(x) < 1\) for \(0 < x < 1\). Since \(a_\tau = (1+\tau)/2\), the preceding bound becomes
\begin{equation}\label{eq:TE_uniform_localization_bound}
	1 - \mathcal{L}_{\delta}^2(\mathbf{E}_1)
	\leq
	\frac{2\tau}{1-\tau} \, f(\tau)^{\,m+1}.
\end{equation}

Let \(d_\varepsilon := 2\varepsilon - \varepsilon^2 > 0\) and choose
\begin{equation}\label{eq:C_lowerbound}
	C(\varepsilon,\delta)=\max\left\{1,\,	\frac{\log\!\bigl(d_\varepsilon(1-\tau)/(2\tau)\bigr)}{\log f(\tau)}-1\right\}.
\end{equation}
 The constant in \eqref{eq:C_lowerbound} depends only on \(\varepsilon\) and \(\delta\), and is independent of \(q\), \(k\), \(m\), and \(\xi\). Because \(\log f(\tau) < 0\), the strict inequality \(m > C(\varepsilon,\delta)\) implies
\begin{equation*}
f(\tau)^{m+1} < \frac{d_\varepsilon(1-\tau)}{2\tau}.
\end{equation*}
It follows from \eqref{eq:TE_uniform_localization_bound} that
\begin{equation*}
1 - \mathcal{L}_{\delta}^2(\mathbf{E}_1) < d_\varepsilon = 1 - (1-\varepsilon)^2.
\end{equation*}
Since both \(\mathcal{L}_{\delta}(\mathbf{E}_1)\) and \(1-\varepsilon\) are nonnegative, the above inequality is equivalent to
\(\mathcal{L}_{\delta}(\mathbf{E}_1) > 1-\varepsilon\).
\end{proof}

Recalling that the TE eigenvalues are the positive zeros of \(F_m^{\mathrm{TE}}\), define the \(\mathbf E_1\)-localized modal subcount by
\begin{equation*}
S_m^{\mathrm{TE}}(T;\varepsilon,\delta)
:= (2m+1) \, \#\Bigl\{ k \in (0,T) : F_m^{\mathrm{TE}}(k)=0,\; \mathcal L_\delta(\mathbf E_1)>1-\varepsilon \Bigr\}.
\end{equation*}
The discussion preceding Lemma~\ref{lem:key_TE} shows that this is a subcount of the full localized counting function. Thus
\begin{equation*}
N_{\mathrm{loc}}^{\mathrm{TE}}(R;\varepsilon,\delta)
\ge N_{\mathrm{loc},1}^{\mathrm{TE}}(R;\varepsilon,\delta)
:= \sum_{m=1}^{\infty} S_m^{\mathrm{TE}}(R;\varepsilon,\delta),
\end{equation*}
and \(S_m^{\mathrm{TE}}(R;\varepsilon,\delta) \le N_m^{\mathrm{TE}}(R)\). The exact sum starts at \(m=1\). Moreover, the angular cutoff proved in Section~\ref{sec:weyl} shows that, with
\begin{equation*}
M_2 := \left\lfloor R - \frac12 \right\rfloor,
\end{equation*}
all terms with \(m > M_2\) vanish. Hence
\begin{equation}\label{N_TE_c_decomposition}
	N_{\mathrm{loc},1}^{\mathrm{TE}}(R;\varepsilon,\delta)
	= \sum_{m=1}^{M_2} S_m^{\mathrm{TE}}(R;\varepsilon,\delta).
\end{equation}

Now, we prove the lower bound for the TE modes in Theorem~\ref{thm:lower_bdd}. 
Set $m_0:=\lfloor C(\varepsilon,\delta)\rfloor+1$ and
$T_m:=\min\left\{R,\frac{m+\frac12}{\xi}\right\}$. Then $m\geq m_0$ implies the strict inequality $m>C(\varepsilon,\delta)$.  Every eigenvalue counted by $N_m^{\mathrm{TE}}(T_m)$ satisfies $k<T_m\leq(m+\frac12)/\xi$, so Lemma~\ref{lem:key_TE} gives
\begin{equation*}
S_m^{\mathrm{TE}}(T_m;\varepsilon,\delta)
=N_m^{\mathrm{TE}}(T_m).
\end{equation*}
Using the monotonicity of $S_m^{\mathrm{TE}}$ in its cutoff and \eqref{N_TE_c_decomposition}, we therefore obtain
\begin{equation}\label{eq:TE_localized_subfamily_bound}
N_{\mathrm{loc}}^{\mathrm{TE}}(R;\varepsilon,\delta)
\geq\sum_{m=m_0}^{M_2}N_m^{\mathrm{TE}}(T_m).
\end{equation}

For this choice of $T_m$,
\begin{equation*}
\xi T_m=\min\left\{\xi R,m+\frac12\right\}\leq m+\frac12.
\end{equation*}
Since the first positive zero of $J_{m + \frac{1}{2}}$ is larger than $m + \frac{1}{2}$, one has $B_{m + \frac{1}{2}}(\xi T_m)=0$.  Formula~\eqref{N_m_decomposition} consequently gives
\begin{equation*}
N_m^{\mathrm{TE}}(T_m)
=(2m+1)\left[B_{m+\frac12}(T_m)+e_m^{\mathrm{TE}}(T_m)\right].
\end{equation*}
Because $|e_m^{\mathrm{TE}}(T_m)|\leq1$ and $M_2=\mathcal O(R)$, the sum of the weighted endpoint errors is $\mathcal O(R^2)$.  Now put
\begin{equation*}
M_1:=\left\lfloor\xi R-\frac12\right\rfloor,
\end{equation*}
and the integer split is exact: $T_m=(m+\frac12)/\xi$ for $m\leq M_1$, whereas $T_m=R$ for $m\geq M_1+1$.  Substitution into \eqref{eq:TE_localized_subfamily_bound} yields
\begin{align}
	N_{\mathrm{loc}}^{\mathrm{TE}}(R;\varepsilon,\delta)
	&\geq \sum_{m=m_0}^{M_1}(2m+1)
	B_{m+\frac12}\!\left(\frac{m+\frac12}{\xi}\right) \notag \\
	&\quad + \sum_{m=\max\{m_0,\,M_1+1\}}^{M_2}(2m+1)
	B_{m+\frac12}(R)-C R^2. \label{eq:Nc_estimate_bound}
\end{align} 
Here and below, $C>0$ denotes a generic constant independent of $R$; it may depend on the fixed parameters $(\varepsilon,\delta,\xi)$ and may change from line to line.
According to \cite{JLZZ25}, one has
\begin{equation}\label{eq:NB1}
	B_{m+\frac{1}{2}}\left(\frac{m+\frac{1}{2}}{\xi}\right)
	= \frac{m+\frac{1}{2}}{\pi}
	\left(\frac{\sqrt{1-\xi^2}}{\xi}-\arccos(\xi)\right)
	+ \mathcal{O}(\log R),
\end{equation}
and
\begin{equation}\label{eq:NB2}
	B_{m+\frac{1}{2}}(R)
	= \frac{m+\frac{1}{2}}{\pi}
	\left(\sqrt{\frac{R^2}{\left(m+\frac{1}{2}\right)^2}-1}
	-\arccos\left(\frac{m+\frac{1}{2}}{R}\right)\right)
	+ \mathcal{O}(\log R).
\end{equation}
Consequently, \eqref{eq:Nc_estimate_bound}, \eqref{eq:NB1}, and \eqref{eq:NB2} give
\begin{equation*}
	\begin{aligned}
		N^{\mathrm{TE}}_{\mathrm{loc}}(R;\varepsilon,\delta) \geqslant&  \sum_{m=0}^{M_1}\left(2m+1\right) \frac{m+\frac{1}{2}}{\pi}\left(\frac{\sqrt{1-\xi^2}}{\xi}-\arccos (\xi)\right)\\
		&+ \sum_{m=M_1+1}^{M_2} \left(2m+1\right) \frac{m+\frac{1}{2}}{\pi}\left(\sqrt{\frac{R^2}{\left(m+\frac{1}{2}\right)^2}-1}-\arccos \left(\frac{m+\frac{1}{2}}{R}\right)\right)-CR^2\log  R.
	\end{aligned}
\end{equation*}

It remains to evaluate the two leading sums. Put
\begin{equation*}
\Phi(x)=x\sqrt{1-x^2}-x^2\arccos x,\quad \Phi(1)=0 .
\end{equation*}
Since $\Phi\in C^1([\xi,1])$, the standard Riemann-sum estimate gives
\begin{equation*}
\sum_{m=M_1+1}^{M_2}\Phi\!\left(\frac{m+\frac12}{R}\right)
=R\int_\xi^1\Phi(x)\,dx+\mathcal O(1).
\end{equation*}
Together with
\begin{equation*}
\sum_{m=0}^{M_1}\left(m+\frac12\right)^2
=\frac{(\xi R)^3}{3}+\mathcal O(R^2),
\end{equation*}
we obtain
\begin{equation*}
	N_{\mathrm{loc}}^{\mathrm{TE}}(R;\varepsilon,\delta)
	\geq \frac{2(\xi R)^3}{3\pi}\left(\frac{\sqrt{1-\xi^2}}{\xi}-\arccos\xi	\right)
	+\frac{2R^3}{\pi}	\int_\xi^1\Phi(x)\,dx-C R^2 \log R . 
\end{equation*}
An elementary integration gives
\begin{equation*}
\int_\xi^1\Phi(x)\,dx
=\frac{1}{9}\left[3\xi^3\arccos\xi+(1-4\xi^2)\sqrt{1-\xi^2}\right].
\end{equation*}
Substituting this identity, the leading terms simplify to $\frac{2R^3}{9\pi}(1-\xi^2)^{3/2}$. Therefore
\begin{equation*}
	N_{\mathrm{loc}}^{\mathrm{TE}}(R;\varepsilon,\delta)
	\geq 	\frac{2R^3}{9\pi}(1-\xi^2)^{3/2}-CR^2\log R .
\end{equation*}
The required localization estimate follows from \eqref{eq:J_mz}, while the
zero-counting estimates are given in \eqref{eq:NB1}--\eqref{eq:NB2}.  This
proves the TE lower bound in Theorem~\ref{thm:lower_bdd}.


\subsection{TM modes}
Similar to the TE case, we again focus exclusively on $\mathbf{E}_1$ and derive an estimate for the localization quantity $\mathcal{L}_{\delta}(\mathbf{E}_1)$ associated with TM modes.
\begin{lemma}\label{lem:key_TM}
For every $0<\varepsilon<\frac12$ and $0<\delta<1$, there exists an integer
$M_{\mathrm{loc}}^{\mathrm{TM}}(\varepsilon,\delta)$ with the following
property: for every $0<\xi<1$, every
$m\ge M_{\mathrm{loc}}^{\mathrm{TM}}(\varepsilon,\delta)$, and every TM
transmission eigenvalue $k$ of angular order $m$ satisfying
\[
0<k<\frac{m+\frac12}{\xi},
\]
the first component of each corresponding TM eigenpair satisfies
\[
\mathcal{L}_{\delta}(\mathbf{E}_1)>1-\varepsilon.
\]
In particular, the threshold is independent of $k$ and $\xi$.
\end{lemma}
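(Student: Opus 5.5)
The plan is to mirror the proof of Lemma~\ref{lem:key_TE}. The new point is that the TM energy density mixes $j_m$ and its derivative. I will show that this density is comparable, within a bounded factor, to a single monotone Bessel profile. After that, the Krasikov-type exponential gap argument of Lemma~\ref{lem:key_TE} applies almost verbatim.

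\emph{Step 1: radial reduction.} Write $q:=k\xi<m+\frac12$ and $\tau:=1-\delta$. By the representation of $\mathbf N_m^l$ in the proof of Proposition~\ref{prop:TM} and the orthogonality of $\boldsymbol Y_m^l$ and $\boldsymbol\Psi_m^l$ on the sphere, the angular integrals of $|\boldsymbol Y_m^l|^2$ and $|\boldsymbol\Psi_m^l|^2$ are $1$ and $m(m+1)$, respectively. Introduce the Riccati--Bessel function $\psi(t):=t j_m(t)$ and substitute $t=qr$. Then
\begin{equation*}
1-\mathcal L_\delta^2(\mathbf E_1)=\frac{\int_0^{q\tau}Q(t)\,dt}{\int_0^{q}Q(t)\,dt},\qquad Q(t):=\frac{m(m+1)\psi(t)^2}{t^2}+\psi'(t)^2 .
\end{equation*}
All constant prefactors cancel in this quotient.

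\emph{Step 2: two-sided comparison.} Note that $\psi(t)=\sqrt{\pi t/2}\,J_{m+\frac12}(t)$, so $\psi'/\psi=\frac1{2t}+J_{m+\frac12}'/J_{m+\frac12}$. On $(0,q]$ we have $J_{m+\frac12}>0$, since its first zero exceeds $m+\frac12$. I will use the classical upper bound $J_\nu'/J_\nu<\nu/t$ before the first zero, which gives $\psi'/\psi\le (m+1)/t$. For the lower side I will use Krasikov's bound from the proof of Lemma~\ref{lem:key_TE}, which gives $\psi'/\psi>0$. Together these yield
\begin{equation*}
m(m+1)\,\frac{\psi^2}{t^2}\;\le\;Q(t)\;\le\;\bigl(m(m+1)+(m+1)^2\bigr)\frac{\psi^2}{t^2}\;\le\;4\,m(m+1)\,\frac{\psi^2}{t^2},\qquad 0<t\le q .
\end{equation*}
Here $\psi^2/t^2=\frac\pi2\,\widetilde P(t)$ with $\widetilde P(t):=J_{m+\frac12}^2(t)/t$. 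This reduces the TM quotient to at most $4$ times the analogous quotient for $\widetilde P$.

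\emph{Step 3: monotonicity and exponential gap for $\widetilde P$.} By Krasikov's estimate,
\begin{equation*}
\frac{d}{dt}\log\widetilde P(t)=-\frac1t+2\frac{J_{m+\frac12}'}{J_{m+\frac12}}\ge\frac{\sqrt{(2m+2)^2-4t^2}-2}{t}.
\end{equation*}
For $t\le q<m+\frac12$ the square root is at least $\sqrt{4m+3}>2$, so $\widetilde P$ is increasing on $(0,q)$. Take $a_\tau=\frac{1+\tau}2$ and $c_\tau=\sqrt{1-a_\tau^2}$ as before. Integrating the log-derivative bound over $[q\tau,qa_\tau]$ gives $\widetilde P(q\tau)/\widetilde P(qa_\tau)\le(\tau/a_\tau)^{(2m+1)c_\tau-2}$. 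The same endpoint and monotonicity bounds as in Lemma~\ref{lem:key_TE} then give
\begin{equation*}
1-\mathcal L_\delta^2(\mathbf E_1)\le\frac{4\tau}{1-a_\tau}\Bigl(\frac{\tau}{a_\tau}\Bigr)^{(2m+1)c_\tau-2}.
\end{equation*}
This bound depends only on $m$ and $\tau$, not on $q$, $k$ or $\xi$. Hence it tends to $0$ as $m\to\infty$, uniformly in these parameters. I then choose $M_{\mathrm{loc}}^{\mathrm{TM}}(\varepsilon,\delta)$ so that the right-hand side is less than $d_\varepsilon=2\varepsilon-\varepsilon^2$, exactly as in \eqref{eq:C_lowerbound}.

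\emph{Main obstacle.} I expect Step 2 to be the hardest, specifically the upper bound on $\psi'^2$ by $\psi^2/t^2$, which needs the inequality $J_\nu'/J_\nu<\nu/t$ on $(0,j_{\nu,1})$. If I prefer to avoid citing that inequality, I will derive it myself. The function $u=t\,J_\nu'/J_\nu-\nu$ satisfies a Riccati equation with $u(0^+)=0$ and $u'<0$ near $0$. A comparison argument then keeps $u<0$ up to the first zero of $J_\nu$.
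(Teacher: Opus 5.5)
Your proof is correct, but it handles the mixed TM density differently from the paper.

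\textbf{How the paper proceeds.} It splits the complementary quotient with the mediant inequality into a ratio for $j_m^2$ and a ratio for $(j_m+tj_m')^2$. It then bounds each by endpoint values. For the second ratio this needs $j_m+tj_m'$ to be increasing, which the paper only establishes below the turning point $\sqrt{m(m+1)}$. That is why it introduces the auxiliary point $b_\tau$, the shortened window $[a_\tau,b_\tau]$, and the extra threshold $m\ge m_\tau^{(0)}$. Finally it compares the quantity $H_m(t)=\frac12+tJ_{m+\frac12}'/J_{m+\frac12}$ at the two points $q\tau$ and $qa_\tau$, using $H_m<m+1$ (recurrence) and Krasikov's lower bound.

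\textbf{How you proceed.} Observe that this $H_m$ is exactly your $t\psi'/\psi$. You use the same two facts, but pointwise on all of $(0,q]$: $0<t\psi'/\psi\le m+1$. This gives the two-sided comparison $m(m+1)\psi^2/t^2\le Q\le 4m(m+1)\psi^2/t^2$ and collapses the problem to the single profile $J_{m+\frac12}^2/t$. That profile is monotone on the whole of $(0,q)$, so no turning-point restriction or second window is needed. You also get an explicit threshold from $\frac{4\tau}{1-a_\tau}(\tau/a_\tau)^{(2m+1)c_\tau-2}<d_\varepsilon$, of the same form as the TE constant \eqref{eq:C_lowerbound}, instead of the implicit $C_\tau$ in \eqref{eq:C_lowerbound_TM}.

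\textbf{Trade-off.} The paper's split keeps the radial and tangential energies separate, which is more modular. It pays for this with extra bookkeeping. Your route is shorter and cleaner, at the price of a harmless factor $4$.

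\textbf{Your flagged obstacle.} The step you call the main obstacle is not one. The recurrence $tJ_\nu'=\nu J_\nu-tJ_{\nu+1}$, together with $J_\nu,J_{\nu+1}>0$ on $(0,q]\subset(0,\nu)$, gives $J_\nu'/J_\nu<\nu/t$ immediately. This is exactly how the paper obtains $H_m<m+1$, so your Riccati comparison argument, though valid, is unnecessary.
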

\begin{proof}
Put $q = k\xi$ and $\tau = 1 - \delta$. Then $0 < q < m + \frac12$ and $0 < \tau < 1$. By Proposition~\ref{prop:TM},
\begin{equation*}
\mathbf N_m^l=\frac{1}{\mathrm i k}
\left(\frac{m(m+1)}{r} j_m(qr)\,\boldsymbol Y_m^l+\frac{1}{r}\frac{\mathrm d}{\mathrm dr}\bigl(r j_m(qr)\bigr)\,\boldsymbol\Psi_m^l\right),
\end{equation*}
where the common constant factor does not affect the localization quotient.
Normalize $Y_m^l$ by $\int_{\mathbb S^2}|Y_m^l|^2\,\mathrm dS = 1$. Since the radial and tangential terms are pointwise orthogonal and
\begin{equation*}
\int_{\mathbb S^2}|\boldsymbol\Psi_m^l|^2\,\mathrm dS = m(m+1),
\end{equation*}
the factor $r^2$ in the volume element cancels the factor $r^{-2}$ in both
terms. Therefore
\begin{equation}\label{eq:TM W_E_1}
\mathcal L_\delta^2(\mathbf E_1)
=\frac{\displaystyle\int_\tau^1\left[m^2(m+1)^2j_m^2(qr)
+m(m+1)\bigl(j_m(qr)+qrj_m'(qr)\bigr)^2\right]\,\mathrm dr}
{\displaystyle\int_0^1\left[m^2(m+1)^2j_m^2(qr)
+m(m+1)\bigl(j_m(qr)+qrj_m'(qr)\bigr)^2\right]\,\mathrm dr}.
\end{equation}
	
For positive numbers $a_i, b_i(i=1,2)$, one has
\begin{equation*}
\frac{a_1+a_2}{b_1+b_2} \leq \max_{i=1,2} \frac{a_i}{b_i}.
\end{equation*}
Applying this to the complementary quotient in \eqref{eq:TM W_E_1} gives
\begin{equation*}
1 - \mathcal L_\delta^2(\mathbf E_1)
\leq
\max\{R_\tau^{(j)}, R_\tau^{(d)}\},
\end{equation*}
where
\begin{equation*}
R_\tau^{(j)}
=\frac{\displaystyle\int_0^\tau j_m^2(qr)\,\mathrm dr}
{\displaystyle\int_0^1 j_m^2(qr)\,\mathrm dr},\quad
R_\tau^{(d)}
=\frac{\displaystyle\int_0^\tau
\bigl(j_m(qr) + qr\, j_m'(qr)\bigr)^2\,\mathrm dr}
{\displaystyle\int_0^1
\bigl(j_m(qr) + qr\, j_m'(qr)\bigr)^2\,\mathrm dr}.
\end{equation*}
	
We estimate these two ratios uniformly for \(0<q<m+\frac12\).  Set
\begin{equation*}
a_\tau=\frac{1+\tau}{2}, \quad b_\tau=\frac{1+a_\tau}{2}, \quad c_\tau=\sqrt{1-a_\tau^2}.
\end{equation*}
Then \(0<\tau<a_\tau<b_\tau<1\).  The first positive zeros of
\(J_{m+\frac12}\) and \(J_{m+\frac32}\) exceed their orders \cite{AS72}; hence
both functions are positive on \((0,m+\frac12)\).  Moreover, the
logarithmic-derivative estimate used in \eqref{eq:P_log_derivative} gives,
for \(0<t<m+\frac12\),
\begin{equation*}
t\frac{j_m'(t)}{j_m(t)}=t\frac{J_{m+\frac12}'(t)}{J_{m+\frac12}(t)}
-\frac12\ge\frac{\sqrt{(2m+2)^2-4t^2}-2}{2}>0.
\end{equation*}
Thus \(j_m\) is positive and strictly increasing on \((0,m+\frac12)\).  The
spherical Bessel equation also gives
\begin{equation*}
\frac{\mathrm d}{\mathrm dt}\bigl(j_m(t)+t\,j_m'(t)\bigr)
=2j_m'(t)+t\,j_m''(t)=\left(\frac{m(m+1)}{t}-t\right)j_m(t).
\end{equation*}
Choose $m_\tau^{(0)}:=\max\left\{1,\,\left\lceil\frac{1}{2\sqrt{1-b_\tau^2}}\right\rceil
\right\}$. If \(m\ge m_\tau^{(0)}\), then
\begin{equation*}
b_\tau\left(m+\frac12\right)<\sqrt{m(m+1)}.
\end{equation*}
Consequently, \(j_m(t)+t\,j_m'(t)\) is positive and strictly increasing on
\((0,qb_\tau)\), uniformly for \(q<m+\frac12\). Therefore
\begin{align}
R_\tau^{(j)}
&\le \frac{\tau j_m^2(q\tau)}{(b_\tau-a_\tau)j_m^2(qa_\tau)},
\label{eq:TM_ratio_j} \\
R_\tau^{(d)}
&\le \frac{\tau\bigl(j_m(q\tau)+q\tau\,j_m'(q\tau)\bigr)^2}
{(b_\tau-a_\tau)\bigl(j_m(qa_\tau)+qa_\tau\,j_m'(qa_\tau)\bigr)^2}.
\label{eq:TM_ratio_D}
\end{align}
	
Let $P_{m+\frac12}(t):=tJ_{m+\frac12}^2(t)$. The uniform estimate \eqref{eq:J_mz} gives
\begin{equation}\label{eq:TM_P_ratio}
\frac{P_{m+\frac12}(q\tau)}{P_{m+\frac12}(qa_\tau)}
\leq \left(\frac{\tau}{a_\tau}\right)^{(2m+1)c_\tau}.
\end{equation}
Since $j_m(t)=\sqrt{\pi/(2t)}J_{m+\frac12}(t)$, it follows that
\begin{equation}\label{eq:TM_j_ratio}
\frac{j_m^2(q\tau)}{j_m^2(qa_\tau)}
=\left(\frac{a_\tau}{\tau}\right)^2
\frac{P_{m+\frac12}(q\tau)}{P_{m+\frac12}(qa_\tau)}
\leq \left(\frac{a_\tau}{\tau}\right)^2
\left(\frac{\tau}{a_\tau}\right)^{(2m+1)c_\tau}.
\end{equation}
	
Define $H_m(t) := \frac12 + t\,\frac{J_{m+\frac12}'(t)}{J_{m+\frac12}(t)}$. Then
\begin{equation*}
j_m(t) + t\,j_m'(t) = j_m(t)\,H_m(t).
\end{equation*}
The recurrence relation
\begin{equation*}
t\,J_{m+\frac12}'(t)=\left(m+\frac12\right)J_{m+\frac12}(t)-t\,J_{m+\frac32}(t)
\end{equation*}
and the above positivity imply $H_m(t) < m+1$.
Krasikov's lower bound gives
\begin{equation*}
H_m(qa_\tau) \ge \frac12\sqrt{(2m+2)^2 - 4q^2 a_\tau^2} 
\ge \left(m+\frac12\right)c_\tau .
\end{equation*}
Since \(m\ge1\), we obtain
\begin{equation*}
0 < \frac{H_m(q\tau)}{H_m(qa_\tau)}
\le \frac{m+1}{\left(m+\frac12\right)c_\tau}
\le \frac{2}{c_\tau}.
\end{equation*}
Together with \eqref{eq:TM_j_ratio}, this proves
\begin{equation}\label{eq:TM_D_ratio}
	\frac{\bigl(j_m(q\tau)+q\tau\,j_m'(q\tau)\bigr)^2}
	{\bigl(j_m(qa_\tau)+qa_\tau\,j_m'(qa_\tau)\bigr)^2}
	\le
	\frac{4}{c_\tau^2}
	\left(\frac{a_\tau}{\tau}\right)^2
	\left(\frac{\tau}{a_\tau}\right)^{(2m+1)c_\tau}.
\end{equation}
	
Combining \eqref{eq:TM_ratio_j}--\eqref{eq:TM_D_ratio}, we find a constant
\(C_\tau > 0\), depending only on \(\tau\), such that
\begin{equation}\label{eq:TM_uniform_localization_bound}
	1 - \mathcal L_\delta^2(\mathbf E_1)
	\le
	C_\tau
	\left(\frac{\tau}{a_\tau}\right)^{(2m+1)c_\tau}
	\le
	C_\tau \, f \tau^{m+1},
\end{equation}
where \(f\) is defined in \eqref{eq:def_f}. The last inequality follows from
\(2m+1 \ge \frac{3}{2}(m+1)\) and \(0 < \tau/a_\tau < 1\).

Let \(d_\varepsilon = 2\varepsilon - \varepsilon^2 > 0\). Since \(0 < f(\tau) < 1\), the integer
\begin{equation}\label{eq:C_lowerbound_TM}
M_{\mathrm{loc}}^{\mathrm{TM}}(\varepsilon,\delta)
:=
\min\left\{
M \in \mathbb N :
M \ge m_\tau^{(0)}
\ \text{and}\
C_\tau f(\tau)^{m+1} < d_\varepsilon
\ \text{for every } m \ge M
\right\}
\end{equation}
is finite and depends only on \(\varepsilon\) and \(\delta\).

For \(m \ge M_{\mathrm{loc}}^{\mathrm{TM}}(\varepsilon,\delta)\), \eqref{eq:TM_uniform_localization_bound} gives
\begin{equation}\label{eq:TM_localization_step}
	1 - \mathcal L_\delta^2(\mathbf E_1) < 2\varepsilon - \varepsilon^2,
\end{equation}
which implies \(\mathcal L_\delta(\mathbf E_1) > 1 - \varepsilon\).
\end{proof}

We now use Lemma~\ref{lem:key_TM} to count a localized TM subfamily.  Let
\begin{equation*}
m_0 := M_{\mathrm{loc}}^{\mathrm{TM}}(\varepsilon,\delta), \quad
T_m := \min\left\{R,\frac{m+\frac12}{\xi}\right\}, \quad
M_1 := \left\lfloor \xi R - \frac12 \right\rfloor, \quad
M_2 := \left\lfloor R - \frac12 \right\rfloor.
\end{equation*}
For \(m_0 \le m \le M_2\), every TM eigenvalue in \((0,T_m)\) satisfies the hypothesis of Lemma~\ref{lem:key_TM}.  Its entire angular eigenspace, of dimension \(2m+1\), is therefore counted by \(N_{\mathrm{loc}}^{\mathrm{TM}}(R;\varepsilon,\delta)\).  Hence, for all sufficiently large \(R\),
\begin{equation}\label{eq:TM_localized_subfamily_bound}
	N_{\mathrm{loc}}^{\mathrm{TM}}(R;\varepsilon,\delta)
	\ge \sum_{m=m_0}^{M_2} N_m^{\mathrm{TM}}(T_m).
\end{equation}
Since \(\xi T_m \le m+\frac12\) and the first positive zero of \(J_{m+\frac12}\) is larger than \(m+\frac12\), one has \(B_{m+\frac12}(\xi T_m)=0\).  The interval-counting formula \eqref{eq:N_m_TM_decomposition_corrected} consequently gives
\begin{equation*}
N_m^{\mathrm{TM}}(T_m)
= (2m+1)\left[B_{m+\frac12}(T_m) + e_m^{\mathrm{TM}}(T_m)\right], \quad |e_m^{\mathrm{TM}}(T_m)| \le 1.
\end{equation*}
The weighted endpoint errors sum to $\mathcal O(R^2)$.  Moreover, $T_m=(m+\frac12)/\xi$ for $m\leq M_1$, while $T_m=R$ for $m\geq M_1+1$.  Thus \eqref{eq:TM_localized_subfamily_bound} implies
\begin{align*}
N_{\mathrm{loc}}^{\mathrm{TM}}(R;\varepsilon,\delta)
\geq{}&\sum_{m=m_0}^{M_1}(2m+1)
B_{m+\frac12}\!\left(\frac{m+\frac12}{\xi}\right) +\sum_{m=\max\{m_0,M_1+1\}}^{M_2}(2m+1)
B_{m+\frac12}(R)-CR^2.
\end{align*}
Applying the uniform zero-counting estimates \eqref{eq:NB1} and \eqref{eq:NB2}, we find that the total weighted zero-counting remainder is \(\mathcal O(R^2\log R)\).  Consequently,
\begin{align*}
N_{\mathrm{loc}}^{\mathrm{TM}}(R;\varepsilon,\delta)
\geq{}&\sum_{m=0}^{M_1}\frac{2(m+\frac12)^2}{\pi}
\left(\frac{\sqrt{1-\xi^2}}{\xi}-\arccos\xi\right)
+\sum_{m=M_1+1}^{M_2}\frac{2R^2}{\pi}
\Phi\!\left(\frac{m+\frac12}{R}\right)
-CR^2\log R,
\end{align*}
where $\Phi(x)=x\sqrt{1-x^2}-x^2\arccos x$.  These are exactly the two principal sums already evaluated in the TE calculation above.  Substituting those estimates and simplifying their cubic terms yields
\begin{equation*}
N^{\mathrm{TM}}_{\mathrm{loc}}(R;\varepsilon,\delta)
\geq \frac{2R^3}{9\pi}(1-\xi^2)^{3/2}-CR^2\log R.
\end{equation*}
This proves the TM lower bound in Theorem~\ref{thm:lower_bdd}.


\section{Proof of Theorem \ref{thm:upper_bdd}} \label{sec:upper_bound}

To establish the upper bounds, we estimate the complement of the localized family. The crucial point is that whenever both components of a transmission eigenpair have a fixed positive $L^2$-energy in the interior ball $B_{1-\delta}$, neither component can be surface-localized near $\partial\Omega$.

Set $b=1-\delta$. Since $1-\mathcal L_\delta^2(\mathbf E)$ represents the fraction of the $L^2$-energy of $\mathbf E$ located in the interior ball $B_b$, our task reduces to estimating the relevant interior energy ratios. For a TE or TM eigenpair of angular order $m$, define
\begin{equation*}
\mu=m+\frac12,\quad x_1=\frac{k\xi}{\mu},\quad x_2=\frac{k}{\mu}.
\end{equation*}
The radial arguments of the two components are $\mu x_1 r$ and $\mu x_2 r$, respectively. Consequently, their complementary energy ratios are given by $Q_\mu^\sigma(x_1;b)$ and $Q_\mu^\sigma(x_2;b)$, where $\sigma\in\{\mathrm{TE},\mathrm{TM}\}$ and, for $x>1/b$, we set
\begin{equation*}
Q_\mu^\sigma(x;b) := \frac{\int_0^b \mathcal I_\sigma(r;\mu x)\,\mathrm d r}{\int_0^1 \mathcal I_\sigma(r;\mu x)\,\mathrm d r},
\end{equation*}
with the integrands
\begin{equation*}
\mathcal I_{\mathrm{TE}}(r;\mu x) := r^2 j_m^2(\mu x r),
\end{equation*}
and
\begin{equation*}
\mathcal I_{\mathrm{TM}}(r;\mu x) :=
m^2(m+1)^2 j_m^2(\mu x r)
+ m(m+1)\bigl(j_m(\mu x r)+\mu x r\, j_m'(\mu x r)\bigr)^2.
\end{equation*}

The next lemma shows that, uniformly away from the turning point, both
quotients are governed by the same explicit profile.
\begin{lemma}\label{lem:uniform_interior_energy}
For any \(0<b<1\) and \(x_0>1/b\), we have
\begin{equation*}
\sup_{x\ge x_0}\Bigl(\bigl|Q_\mu^{\mathrm{TE}}(x;b)-\Gamma_b(x)\bigr|
+\bigl|Q_\mu^{\mathrm{TM}}(x;b)-\Gamma_b(x)\bigr|\Bigr)\to 0,
\quad \mbox{as~} \mu\to\infty,
\end{equation*}
where $\Gamma_b(x):=\sqrt{\frac{b^2x^2-1}{x^2-1}}$.
\end{lemma}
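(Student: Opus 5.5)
We rescale so that both quotients become integrals in the variable $t=\mu x r$ over $[0,\mu x]$ and $[0,\mu b x]$. The profile $\Gamma_b$ is what one expects from the WKB (Debye) approximation of $J_\mu$ in the oscillatory region. For $\nu=\mu$ and $t>\mu$, the uniform Debye asymptotics give
\[
J_\mu(t)^2=\frac{2}{\pi\sqrt{t^2-\mu^2}}\cos^2\!\bigl(\psi_\mu(t)\bigr)\bigl(1+o(1)\bigr),
\qquad \psi_\mu(t)=\sqrt{t^2-\mu^2}-\mu\arccos(\mu/t)-\tfrac{\pi}{4}.
\]
On $t<\mu$ the function $J_\mu$ is exponentially small except in an $\mathcal O(\mu^{1/3})$ transition layer around $t=\mu$. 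Writing $r^2 j_m^2(\mu x r)=\frac{\pi}{2\mu x}\,r\,J_\mu^2(\mu x r)$ and substituting $r=s/x$ with $s\in[1,x]$, the TE integrand becomes
\[
\frac{r J_\mu^2(\mu x r)}{\mu x}\approx \frac{2}{\pi\mu^2 x^2}\,\frac{s\cos^2(\cdots)}{\sqrt{s^2-1}}.
\]
The phase has derivative of size $\mu\sqrt{s^2-1}/s$, which tends to infinity as $\mu\to\infty$. A Riemann–Lebesgue/averaging argument replaces $\cos^2$ by $1/2$. This gives
\[
\int_0^{b}\approx \frac{1}{\pi\mu^2x^2}\int_1^{bx}\frac{s\,ds}{\sqrt{s^2-1}}=\frac{\sqrt{b^2x^2-1}}{\pi\mu^2x^2},
\]
and likewise for $\int_0^1$ with $bx$ replaced by $x$. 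The ratio is exactly $\Gamma_b(x)$.

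\textbf{Steps.} I will carry out the argument in the following order.

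First, for TE, split each integral into the pieces $s\in[0,1-\mu^{-1/2}]$, $s\in[1-\mu^{-1/2},1+\mu^{-1/2}]$ and $s\ge 1+\mu^{-1/2}$.
\begin{itemize}
\item On the first piece, the monotonicity/Krasikov bound already used in Lemma~\ref{lem:key_TE} shows that the contribution is $o(\mu^{-2})$ relative to the main term.
\item On the middle piece, the uniform bound $|J_\mu(t)|\le C\mu^{-1/3}$ gives a contribution of size $\mathcal O(\mu^{-1/2}\mu^{-2/3}\mu^{-1})$, which is $o(\mu^{-2})$ after normalisation.
\item On the third piece, the Debye expansion with its uniform error term, together with an integration by parts for $\cos(2\psi_\mu)$, gives the main term plus an error of size $\mathcal O(\mu^{-1}\cdot\mu^{1/4})$ times the main term.
\end{itemize}
Since the lower limits satisfy $bx\ge bx_0>1$, the denominators are bounded below by $c(b,x_0)\mu^{-2}$ uniformly in $x\ge x_0$, so the errors are uniformly small relative to the denominators. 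Uniformity as $x\to\infty$ needs care: the main terms grow like $\mu^{-2}x^{-1}$ while the oscillatory errors are at most $\mu^{-3}x^{-1}$. I would check this scaling explicitly, or more simply normalise by $x$ first.

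Second, for TM, use $m(m+1)=\mu^2-\frac14$ together with $j_m+tj_m'=j_m H_m$ to express $t\,j_m'$ via $J_\mu'$. In the oscillatory region, $J_\mu'(t)\approx -\sqrt{1-\mu^2/t^2}\,\sqrt{\tfrac{2}{\pi\sqrt{t^2-\mu^2}}}\,\sin\psi_\mu$. Dividing by $\mu^4$, the integrand is then
\[
j_m^2+\frac{(t\,j_m')^2}{\mu^2}\approx \frac{\pi}{2t}\Bigl[J_\mu^2+\bigl(1-\mu^2/t^2\bigr)\tfrac{t^2}{\mu^2}J_\mu'^2\Bigr]\cdot(\text{const}),
\]
so that with $t=\mu s$ it becomes
\[
\frac{1}{\pi\mu s\sqrt{\mu^2(s^2-1)}}\Bigl[\cos^2\psi+(s^2-1)\sin^2\psi\Bigr].
\]
This is not proportional to the TE density. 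Averaging gives the density $\frac{s^2}{2\mu^2 s\sqrt{s^2-1}}$ in $s$ (with $dr=ds/x$), which is the same as in TE up to constants. Hence the TM ratio also converges to $\Gamma_b(x)$, and the same three-region splitting handles the errors.

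\textbf{Main obstacle.} The hardest part is making the Debye and oscillatory-averaging errors uniform in $x\in[x_0,\infty)$, because for large $x$ the number of oscillations grows. I would first try normalising both numerator and denominator by $x^{-1}$ (respectively $x$ in TM), using the exact antiderivative identities for Bessel products,
\[
\int tJ_\mu^2\,dt=\tfrac{t^2}{2}\bigl(J_\mu'^2+(1-\mu^2/t^2)J_\mu^2\bigr).
\]
These avoid averaging altogether, since the integrals reduce to boundary values. Inserting the Debye forms into this closed expression yields $\Gamma_b(x)$ with an explicit $\mathcal O(1/(\mu\sqrt{b^2x_0^2-1}))$ error that is uniform in $x$. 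For TM, the analogous Lommel-type identities for $\int J_\mu^2\,dt/t$ and for the derivative terms should play the same role.
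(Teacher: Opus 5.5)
Your proposal is correct in substance, but its main line differs from the paper's. You obtain $\Gamma_b$ by inserting Debye asymptotics into the integrands and averaging $\cos^2\psi_\mu\to\frac12$ (for TM, $\cos^2\psi_\mu+(s^2-1)\sin^2\psi_\mu\to s^2/2$). You then control a three-region split around the turning point $t=\mu$.

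The paper never integrates an asymptotic expansion. It rewrites both quotients as ratios of endpoint quantities. For TE it uses the Lommel identity $I_\mu(z)=\int_0^z tJ_\mu^2\,\mathrm dt=\frac12\bigl[(z^2-\mu^2)J_\mu(z)^2+z^2J_\mu'(z)^2\bigr]$. For TM it uses the exact formula $K_\mu(z)=\frac{\pi}{2}\bigl[zJ_\mu(z)J_\mu'(z)+I_\mu(z)+\frac12J_\mu(z)^2\bigr]$. Only then does it insert Debye forms, at $z=\mu y$ with $y\ge bx_0>1$, obtaining $I_\mu(\mu y)=\frac{\mu}{\pi}\sqrt{y^2-1}\,(1+\mathcal O(\mu^{-1}))$ and $K_\mu(\mu y)=\frac{\mu}{2}\sqrt{y^2-1}\,(1+\mathcal O(\mu^{-1}))$ uniformly in $y\ge y_0$.

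For TE this is exactly the fallback you describe under ``main obstacle''. For TM, the identity you need is not one for $\int J_\mu^2\,\mathrm dt/t$ by itself, which has no such endpoint form. It is one for the combination that actually occurs. Writing $(\mu^2-\frac14)j_m^2+\bigl((tj_m)'\bigr)^2=\frac{\pi}{2}\bigl[\frac{\mu^2}{t}J_\mu^2+J_\mu J_\mu'+tJ_\mu'^2\bigr]$, Bessel's equation gives $\frac{\mu^2}{t}J_\mu^2+tJ_\mu'^2=(tJ_\mu J_\mu')'+tJ_\mu^2$. That yields the paper's closed form for $K_\mu$.

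The trade-off is as follows. The endpoint route gives uniformity in $x\in[x_0,\infty)$ almost for free, and it needs asymptotics only strictly away from the turning point. Your averaging route needs three extra ingredients: an Airy-type uniform expansion valid down to $s=1+\mu^{-1/2}$, Landau's bound $|J_\mu|\le C\mu^{-1/3}$, and explicit bookkeeping of the oscillatory errors as $x\to\infty$. In exchange, it shows transparently why both polarizations share the averaged density $s/\sqrt{s^2-1}$, and hence the same profile $\Gamma_b$.

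Two small slips do not affect the conclusion. First, the Debye remainder must be additive, $J_\mu(t)^2=\frac{2}{\pi\sqrt{t^2-\mu^2}}\bigl(\cos^2\psi_\mu(t)+o(1)\bigr)$, not multiplicative, since $\cos\psi_\mu$ vanishes. Second, at the inner edge $s=1+\mu^{-1/2}$ of your third region, the amplitude $s/\sqrt{s^2-1}\sim\mu^{1/4}$ multiplies the factor $1/\psi_\mu'\sim\mu^{-3/4}$. So the integration-by-parts error, and likewise the integrated Debye remainder, is $\mathcal O(\mu^{-1/2})$ relative to the main term rather than $\mathcal O(\mu^{-3/4})$, which is still $o(1)$.
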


\begin{proof}
We begin by rewriting the energy integrals as endpoint quantities. This allows us to avoid integrating a pointwise oscillatory expansion through the turning region. Set
\begin{equation*}
I_\mu(z):=\int_0^z t\, J_\mu(t)^2\,\mathrm d t .
\end{equation*}
By the Lommel identity together with the standard Bessel recurrences, we have
\begin{equation*}
I_\mu(z)=\frac12\left[(z^2-\mu^2)J_\mu(z)^2+z^2 J_\mu'(z)^2\right].
\end{equation*}
Since \(j_m(t)=\sqrt{\pi/(2t)}\,J_\mu(t)\), the change of variables \(t=\mu x r\) yields
\begin{equation*}
Q_\mu^{\mathrm{TE}}(x;b)=\frac{I_\mu(\mu x b)}{I_\mu(\mu x)}.
\end{equation*}
	
For the TM quotient, the same change of variables gives
\begin{equation*}
Q_\mu^{\mathrm{TM}}(x;b)=\frac{K_\mu(\mu x b)}{K_\mu(\mu x)},
\end{equation*}
where
\begin{equation*}
K_\mu(z):=\int_0^z\left[\left(\mu^2-\frac14\right)j_m(t)^2
+\bigl((t j_m(t))'\bigr)^2\right]\,\mathrm d t .
\end{equation*}
Using the Bessel equation together with the identity
\begin{equation*}
(t j_m(t))'=\sqrt{\frac{\pi}{2t}}\left(\frac12 J_\mu(t)+t J_\mu'(t)\right),
\end{equation*}
one obtains
\begin{equation*}
K_\mu(z)=\frac{\pi}{2}\left[z J_\mu(z) J_\mu'(z)+I_\mu(z)+\frac12 J_\mu(z)^2\right].
\end{equation*}
	
For each fixed \(y_0>1\), the classical large-order oscillatory expansions of \(J_\mu\) and \(J_\mu'\) are uniform on \([y_0,\infty)\); see \cite{OL74}. Inserting these expansions into the endpoint identities above yields
\begin{equation*}
I_\mu(\mu y)=\frac{\mu}{\pi}\sqrt{y^2-1}\left(1+\mathcal O_{y_0}(\mu^{-1})\right),
\end{equation*}
and
\begin{equation*}
K_\mu(\mu y)=\frac{\mu}{2}\sqrt{y^2-1}\left(1+\mathcal O_{y_0}(\mu^{-1})\right),
\end{equation*}
uniformly for \(y\ge y_0\).
	
Now choose \(y_0=b x_0>1\). For every \(x\ge x_0\), both \(b x\) and \(x\) belong to \([y_0,\infty)\). Consequently,
\begin{equation*}
Q_\mu^{\mathrm{TE}}(x;b)=\sqrt{\frac{b^2x^2-1}{x^2-1}}+o(1),\quad
Q_\mu^{\mathrm{TM}}(x;b)=\sqrt{\frac{b^2x^2-1}{x^2-1}}+o(1),
\end{equation*}
uniformly for \(x\ge x_0\). This completes the proof.
\end{proof}
	
We now turn this approximation into a spectral cutoff. In particular, whenever \(k\xi/\mu\) is larger than a suitable threshold, the explicit profile \(\Gamma_b\) implies that both interior energy ratios admit a positive lower bound.

\begin{lemma}\label{lem_inside_estimate}
Fix $0<\xi<1$, $0<\delta<1$, and $0<\varepsilon_0<1-\delta$. Choose $\eta\in(\varepsilon_0,1-\delta)$ and set
\begin{equation}\label{eq:def_a}
	a_{\eta,\delta}:=\sqrt{\frac{(1-\delta)^2-\eta^2}{1-\eta^2}}.
\end{equation}
Let $k$ be a TE or TM transmission eigenvalue of angular order $m$, and let $(\mathbf E_1,\mathbf E_2)$ be a corresponding eigenpair. If
\begin{equation}\label{eq:assum_1}
	k>\frac{\mu}{\xi a_{\eta,\delta}}
	=\frac{\mu}{\xi}
	\sqrt{\frac{1-\eta^2}{(1-\delta)^2-\eta^2}},
\end{equation}
then, for all sufficiently large $m$ (uniformly in the polarization), we have
\begin{equation}\label{eq:inside_estimate_1}
	1-\mathcal L_\delta^2(\mathbf E_1)>\varepsilon_0,	\quad
	1-\mathcal L_\delta^2(\mathbf E_2)>\varepsilon_0.
\end{equation}
The required lower bound on $m$ depends only on $\varepsilon_0,\eta,\delta$, and is independent of $k$ and $\xi$.
\end{lemma}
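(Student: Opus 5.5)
The plan is to reduce everything to Lemma~\ref{lem:uniform_interior_energy}. By the definitions preceding that lemma, for an eigenpair of angular order $m$ with $\mu=m+\frac12$ and $b=1-\delta$ we have
\[
1-\mathcal L_\delta^2(\mathbf E_1)=Q_\mu^\sigma(x_1;b),\quad
1-\mathcal L_\delta^2(\mathbf E_2)=Q_\mu^\sigma(x_2;b),\quad
x_1=\frac{k\xi}{\mu},\quad x_2=\frac{k}{\mu}.
\]
The angular factors cancel exactly as in \eqref{energy_E1} and \eqref{eq:TM W_E_1}. Hypothesis \eqref{eq:assum_1} reads $x_1>1/a$ with $a=a_{\eta,\delta}$. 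Since $\xi<1$, it also gives $x_2=x_1/\xi>x_1>1/a$.

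Set $x_0:=1/a$. First I check that $x_0>1/b$, i.e.\ $a<b$. We have $a^2<b^2$ iff $b^2-\eta^2<b^2(1-\eta^2)$, iff $b^2<1$, which holds. Hence Lemma~\ref{lem:uniform_interior_energy} applies on $[x_0,\infty)$.

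Next I evaluate the profile. A direct computation shows $\Gamma_b(1/a)=\eta$:
\[
\frac{b^2/a^2-1}{1/a^2-1}=\frac{b^2-a^2}{1-a^2},
\]
and $1-a^2=\frac{1-b^2}{1-\eta^2}$, $b^2-a^2=\frac{\eta^2(1-b^2)}{1-\eta^2}$, so the ratio is $\eta^2$. Moreover $\Gamma_b^2(x)=b^2-\frac{1-b^2}{x^2-1}$ is strictly increasing in $x>1$. Therefore $\Gamma_b(x)\ge\eta$ for all $x\ge x_0$.

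Then I apply uniform closeness. Put $\gamma:=\eta-\varepsilon_0>0$. Lemma~\ref{lem:uniform_interior_energy} with this $x_0$ gives $\mu_*$, depending only on $b$ and $x_0$ (hence only on $\eta,\delta$), such that for $\mu\ge\mu_*$,
\[
\sup_{x\ge x_0}\bigl|Q^\sigma_\mu(x;b)-\Gamma_b(x)\bigr|<\gamma,\qquad \sigma\in\{\mathrm{TE},\mathrm{TM}\}.
\]
For $x\in\{x_1,x_2\}$ this yields $Q^\sigma_\mu(x;b)>\eta-\gamma=\varepsilon_0$, which is \eqref{eq:inside_estimate_1}. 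The threshold on $m$ does not involve $k$ or $\xi$, since $\xi$ enters only through the already-verified membership $x_1,x_2\ge x_0$.

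The main obstacle is that this argument rests entirely on the uniformity in $x$ claimed in Lemma~\ref{lem:uniform_interior_energy}, which the paper takes as given. The endpoint asymptotics $I_\mu(\mu y)\sim\frac{\mu}{\pi}\sqrt{y^2-1}$ are oscillation-averaged, and one must make sure the $\mathcal O(\mu^{-1})$ error is uniform as $y\to\infty$. If I had to justify this, I would write $I_\mu$ via the Debye (Hankel) expansions. The oscillatory parts $\cos^2$ and $\sin^2$ combine into the nonoscillatory $(z^2-\mu^2)J^2+z^2J'^2$, whose error is uniform on $[y_0,\infty)$ by Olver's uniform expansions. For the TM quotient, the extra terms $zJJ'$ and $J^2/2$ are $\mathcal O(1)$, so they are negligible against the order-$\mu\sqrt{y^2-1}$ main term.
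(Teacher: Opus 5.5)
Your proof is correct and follows the paper's argument essentially step for step: you identify $1-\mathcal L_\delta^2(\mathbf E_i)$ with $Q_\mu^\sigma(x_i;b)$, use $\Gamma_b(1/a)=\eta$ together with the monotonicity of $\Gamma_b$, and invoke the uniform closeness of Lemma~\ref{lem:uniform_interior_energy} on $[1/a,\infty)$ with tolerance $\eta-\varepsilon_0$. Your explicit checks that $a<b$ and that $\Gamma_b(1/a)=\eta$ supply details the paper only asserts, and the uniformity concern you raise is not a gap here, since the paper's proof of Lemma~\ref{lem:uniform_interior_energy} handles it via the Lommel endpoint identities and uniform large-order expansions on $[y_0,\infty)$, as you suggest.
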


\begin{proof}
Let \(b=1-\delta\) and \(a=a_{\eta,\delta}\). A direct substitution into the definition of \(\Gamma_b\) gives \(\Gamma_b(1/a)=\eta\). Moreover,
\begin{equation*}
\frac{\mathrm d}{\mathrm dx}\Gamma_b(x)^2
=\frac{2x(1-b^2)}{(x^2-1)^2}>0,
\quad x>1/b,
\end{equation*}
so \(\Gamma_b\) is strictly increasing on its domain. Under the hypothesis \eqref{eq:assum_1},
\begin{equation*}
x_1:=\frac{k\xi}{\mu}>\frac1a>\frac1b,\quad
x_2:=\frac{k}{\mu}=\frac{x_1}{\xi}>x_1.
\end{equation*}
For each polarization \(\sigma\in\{\mathrm{TE},\mathrm{TM}\}\), the complementary interior energy ratios of \(\mathbf E_1\) and \(\mathbf E_2\) are respectively \(Q_\mu^\sigma(x_1;b)\) and \(Q_\mu^\sigma(x_2;b)\). By Lemma~\ref{lem:uniform_interior_energy}, for all sufficiently large \(m\), both ratios differ from their corresponding \(\Gamma_b\)-values by less than \(\eta-\varepsilon_0\). Since \(x_i>1/a\) and \(\Gamma_b\) is strictly increasing, we have \(\Gamma_b(x_i)>\Gamma_b(1/a)=\eta\). Consequently,
\begin{equation*}
Q_\mu^\sigma(x_i;b)>\Gamma_b(x_i)-(\eta-\varepsilon_0)>\eta-(\eta-\varepsilon_0)
=\varepsilon_0,\quad i=1,2.
\end{equation*}
Thus both inequalities in \eqref{eq:inside_estimate_1} hold, completing the proof.
\end{proof}

We now apply Lemma~\ref{lem_inside_estimate} with the localization tolerance $\varepsilon$. Define $d_\varepsilon:=1-(1-\varepsilon)^2=2\varepsilon-\varepsilon^2$.
Fix $\eta\in(d_\varepsilon,1-\delta)$, and let $a=a_{\eta,\delta}$ be given by \eqref{eq:def_a}. Lemma~\ref{lem_inside_estimate} then provides an integer $m_*$, independent of $k$, $R$, and $\xi$, such that for both polarizations $\sigma\in\{\mathrm{TE},\mathrm{TM}\}$, whenever $m\ge m_*$ and $k>\mu/(\xi a)$, we have
\begin{equation}\label{eq:upper_nonloc_condition}
	1-\mathcal L_\delta^2(\mathbf E_i)>d_\varepsilon,\quad i=1,2,
\end{equation}
which precisely means that neither component is $(\varepsilon,\delta)$-localized.

For a fixed polarization \(\sigma\in\{\mathrm{TE},\mathrm{TM}\}\), let \(F_m^\sigma\) denote the corresponding characteristic function and \(N_m^\sigma\) the modal counting function. For \(m\ge m_*\), we define the non-localized modal count of order \(m\) by
\begin{equation*}
\widetilde S_m^\sigma(R;\varepsilon,\delta)
:=(2m+1)\#\left\{k:F_m^\sigma(k)=0,\ \frac{\mu}{\xi a}<k<R\right\},
\end{equation*}
and the total non-localized count by
\begin{equation*}
\widetilde N^\sigma(R;\varepsilon,\delta)
:=
\sum_{m=m_*}^{\infty}
\widetilde S_m^\sigma(R;\varepsilon,\delta).
\end{equation*}
In view of \eqref{eq:upper_nonloc_condition}, these eigenvalues are not included in the localized count \(N_{\mathrm{loc}}^\sigma\). Consequently,
\begin{equation}\label{eq:upper_complement_ineq}
	N_{\mathrm{loc}}^\sigma(R;\varepsilon,\delta)
	\leq
	N^\sigma(R)-\widetilde N^\sigma(R;\varepsilon,\delta).
\end{equation}

It remains to determine the maximal angular order for which the interval \((\mu/(\xi a), R)\) is nonempty. Define
\begin{equation*}
M_R:=\max\left\{m\in\mathbb N:m\geq m_*,~\frac{\mu}{\xi a}<R \right\}.
\end{equation*}
Since \(\mu=m+\tfrac12\), the condition \(\mu/(\xi a)<R\) is equivalent to \(m<\xi aR-\tfrac12\). Hence, ignoring the fixed lower cutoff \(m_*\), we have the asymptotic formula
\begin{equation*}
M_R=\left\lfloor \xi aR-\frac12\right\rfloor+\mathcal O(1).
\end{equation*}

For \(m>M_R\), the set in the definition of \(\widetilde S_m^\sigma\) is empty, so the sum defining \(\widetilde N^\sigma\) effectively terminates at \(M_R\).
For $m\leq M_R$, we have
\begin{equation*}
\widetilde S_m^\sigma(R;\varepsilon,\delta)
=N_m^\sigma(R)-N_m^\sigma\!\left(\frac{\mu}{\xi a}\right)+\mathcal O(2m+1),
\end{equation*}
where the error only accounts for the possible endpoint
$k=\mu/(\xi a)$.  Summing the endpoint errors and applying the bound
\begin{equation*}
\sum_{m\leq M_R}(2m+1)=\mathcal O(R^2),
\end{equation*}
we obtain, uniformly in both polarizations,
\begin{eqnarray}  \label{eq:NTEuc}
\widetilde N^\sigma(R;\varepsilon,\delta)
&=&\sum_{m=1}^{\lfloor \xi aR-\frac12\rfloor}
(2m+1)\left[B_\mu(R)-B_\mu(\xi R)\right]-\sum_{m=1}^{\lfloor \xi aR-\frac12\rfloor}
(2m+1)\left[B_\mu\!\left(\frac{\mu}{\xi a}\right)-B_\mu\!\left(\frac{\mu}{a}\right)
\right]+\mathcal O(R^2) \nonumber\\
&=& I_1-I_2+\mathcal O(R^2),
\end{eqnarray}
where \(I_1\) accounts for the contribution from the full cutoff \(R\), whereas \(I_2\) removes the modes lying below the non-localization threshold \(\mu/(\xi a)\).

For the term \(I_1\), define
\begin{equation*}
F_\xi(x):=x\sqrt{1-x^2}-x^2\arccos x-x\sqrt{\xi^2-x^2}+x^2\arccos(x/\xi).
\end{equation*}
Then, applying the Euler–Maclaurin summation formula to the resulting sum, we obtain
\begin{equation*}
I_1=\frac{2R^2}{\pi}\sum_{m=1}^{\lfloor \xi aR-\frac12\rfloor}
F_\xi\!\left(\frac{m+\frac12}{R}\right)+\mathcal O(R^2\log R).
\end{equation*}
Since \(F_\xi\in C^1([0,\xi a])\), the above sum is a Riemann sum for the integral of \(F_\xi\), and therefore
\begin{equation*}
I_1=\frac{2R^3}{\pi}\int_0^{\xi a}F_\xi(x)\,\mathrm dx+\mathcal O(R^2\log R).
\end{equation*}
Now define
\begin{equation*}
P_1(x):=\int_0^x\left(t\sqrt{1-t^2}-t^2\arccos t\right)\,\mathrm dt
=\frac19+\frac{4x^2-1}{9}\sqrt{1-x^2}-\frac13x^3\arccos x.
\end{equation*}
Making the substitution \(x=\xi y\) in the integral yields
\begin{equation}\label{eq:NTEucI1}
I_1=\frac{2}{\pi}\left[	P_1(\xi a)-\xi^3P_1(a)\right]R^3+\mathcal O(R^2\log R).
\end{equation}

For the term \(I_2\), define
\begin{equation*}
P_2(x):=\frac{\sqrt{1-x^2}}{x}-\arccos x,\quad 0<x<1.
\end{equation*}
We invoke the uniform Bessel zero-counting estimate from \cite{JLZZ25}: for every fixed \(c\in(0,1)\) and all \(0\le \mu\le cT\),
\begin{equation*}
B_\mu(T)=\frac1\pi\left[\sqrt{T^2-\mu^2}-\mu\arccos\!\left(\frac{\mu}{T}\right)\right]
+\mathcal O(\log T),
\end{equation*}
with the implied constant depending only on \(c\). In the present context, we have
\begin{equation*}
\frac{\mu}{R}\le \xi a,\quad\frac{\mu}{\xi R}\le a,\quad 0<\xi a<a<1,
\end{equation*}
so the estimate is applicable uniformly in the summation range. Consequently,
\begin{equation*}
B_\mu(\mu/x)=\frac{\mu}{\pi}P_2(x)+\mathcal O(\log R),
\end{equation*}
uniformly for \(x=\xi a\) and \(x=a\). Since
\begin{equation*}
\sum_{m=1}^{\lfloor \xi aR-\frac12\rfloor}2\mu^2
=\frac{2}{3}(\xi aR)^3+\mathcal O(R^2),
\end{equation*}
we obtain
\begin{equation}\label{eq:NTEucI2}
I_2=\frac{2}{3\pi}(\xi a)^3\left[P_2(\xi a)-P_2(a)\right]R^3+\mathcal O(R^2\log R).
\end{equation}

Combining \eqref{eq:NTEuc}–\eqref{eq:NTEucI2} with the Weyl laws \eqref{N^TE(R)} and \eqref{eq:N^TM(R)}, and then applying the complement inequality \eqref{eq:upper_complement_ineq}, yields
\begin{align*}
N_{\mathrm{loc}}^\sigma(R;\varepsilon,\delta)
&\leq\frac{2R^3}{9\pi}(1-\xi^3)-\frac{2R^3}{\pi}\left[P_1(\xi a)-\xi^3P_1(a)\right]\\
&+\frac{2R^3}{3\pi}(\xi a)^3\left[P_2(\xi a)-P_2(a)\right]+\mathcal O(R^2\log R).
\end{align*}
Using the elementary identity
\begin{equation*}
\frac{2}{9\pi}-\frac{2}{\pi}P_1(x)+\frac{2}{3\pi}x^3P_2(x)=
\frac{2}{9\pi}(1-x^2)^{3/2},
\end{equation*}
we can simplify the leading-order coefficient to obtain
\begin{equation*}
N_{\mathrm{loc}}^\sigma(R;\varepsilon,\delta)
\le \frac{2}{9\pi}\left[(1-\xi^2a^2)^{3/2}-\xi^3(1-a^2)^{3/2}\right]R^3+C R^2\log R,
\quad \sigma\in\{\mathrm{TE},\mathrm{TM}\},
\end{equation*}
where the constant \(C>0\) is independent of \(R\), \(\xi\), and the polarization. This proves Theorem~\ref{thm:upper_bdd}.


\section{Numerical examples}\label{sec:numerics}
In this section, we present several representative numerical examples to illustrate surface localization and to explore whether the ball-based asymptotic picture persists numerically for other geometries. All simulations were performed on an i7-13700 processor with 32 GB of RAM.

The tested domains include three three-dimensional geometries:

\begin{enumerate}
\item[(a)] The unit ball in $\mathbb{R}^3$.
\item[(b)] A cube with side length $a_{\mathrm{cube}} = 1.612$.
\item[(c)] A kite-shaped domain parameterized by
\begin{eqnarray}\label{eq:kite}
x(t,\theta) &=& \cos t + 0.65\cos(2t) - 0.65, \nonumber\\
y(t,\theta) &=& \sin t \cos \theta, \\
z(t,\theta) &=& \sin t \sin \theta, \nonumber
\end{eqnarray}
where $t \in [0, \pi]$ and $\theta \in [0, 2\pi]$.
\end{enumerate}
The volumes of these geometries are each approximately $4.1888$; this common-volume choice is used only to normalize the numerical comparison.  The ball Weyl law proved above is not invoked here as a theorem for the nonspherical domains. Following \cite{DLWW22,MS12}, we use edge elements to approximate \eqref{eq:trans2}, and the eigenvalues and eigenvectors for the corresponding nonsymmetric eigenvalue problem are computed with MATLAB sparse eigenvalue routines in the PDE implementation. These computations are intended as qualitative numerical evidence rather than as a mesh-convergence study.
We set $\varepsilon = 0.1$ and $\delta = 0.1$ in Definition~\ref{def:localization}.

First, in Figure~\ref{fig:total}, we present the $\mathbf{E}_1$-component of the eigenfunctions for three geometries. These examples correspond to the largest surface-localizing eigenvalue with $k^2<16$ in the tested range. The surface-localizing property holds robustly across these geometric configurations. The eigenfunctions exhibit strong concentration near the boundaries and negligible interior activity, thereby confirming the surface-localization phenomenon.

\begin{figure}[htbp]
	\centering
	\subfigure[]{
		\includegraphics[width=3.5cm,height=3cm]{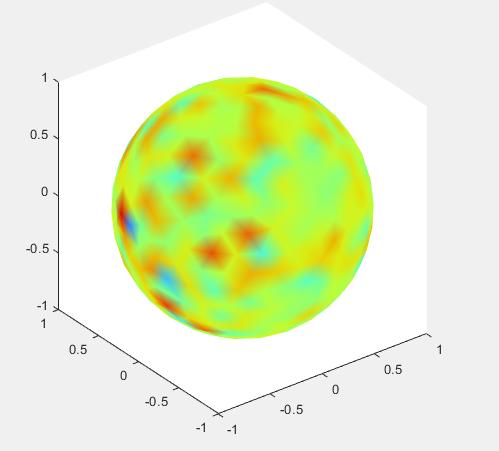}
	}\quad
	\subfigure[]{
		\includegraphics[width=3.5cm,height=3cm]{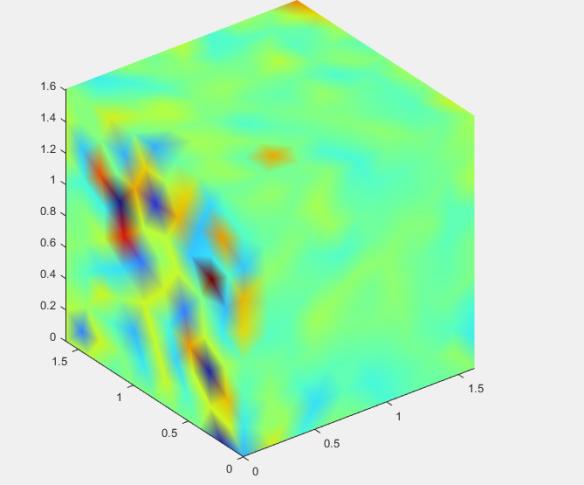}
	}\quad
	\subfigure[]{
		\includegraphics[width=3.5cm,height=3cm]{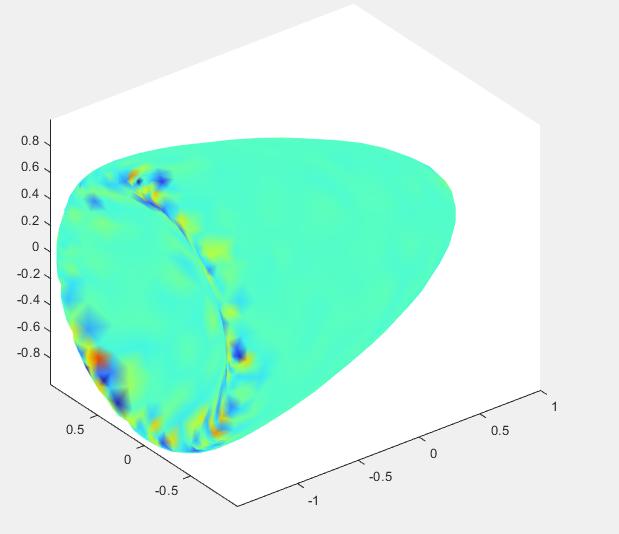}
	}

	\subfigure[]{
		\includegraphics[width=3.5cm,height=3cm]{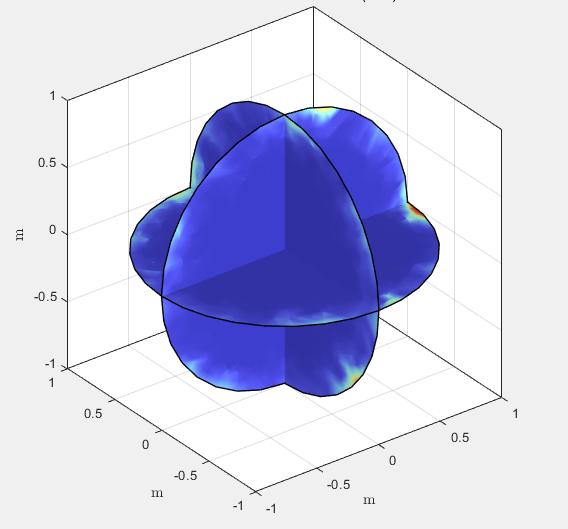}
	}
	\quad
	\subfigure[]{
		\includegraphics[width=3.5cm,height=3cm]{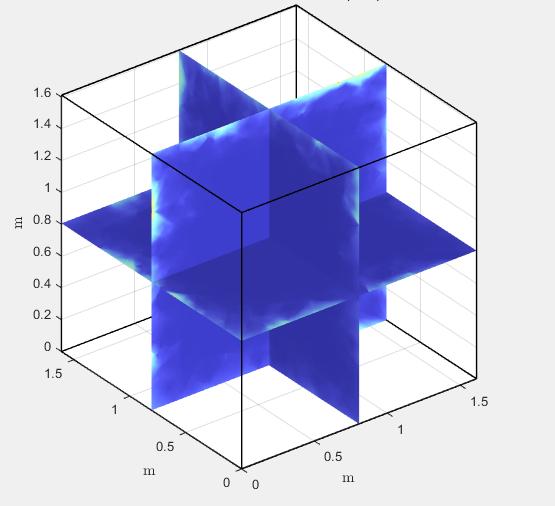}
	}
	\quad
	\subfigure[]{
		\includegraphics[width=3.5cm,height=3cm]{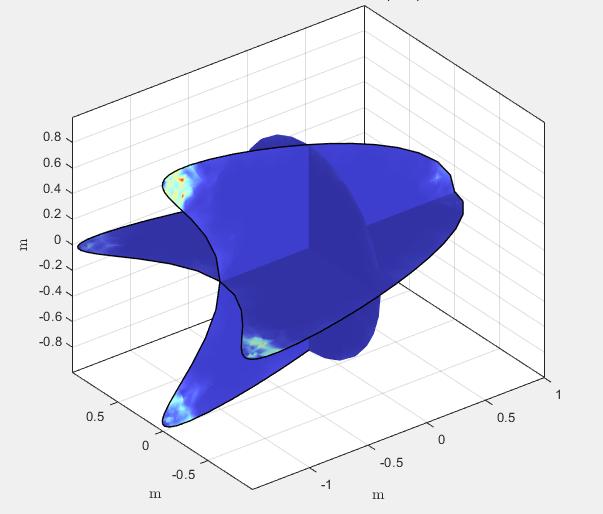}
	}
	\caption{Surface-localized eigenfunctions for three geometries: (a) the unit ball; (b) a cube with $a_{\mathrm{cube}}=1.612$; (c) the kite-shaped domain parameterized by \eqref{eq:kite}; (d) a slice of the unit ball; (e) a slice of the cube; (f) a slice of the kite-shaped domain.}
	\label{fig:total}
\end{figure}

For the unit ball, the localization ratios are evaluated from
\eqref{energy_E1} for TE modes and \eqref{eq:TM W_E_1} for TM modes.
For $\sigma\in\{\mathrm{TE},\mathrm{TM}\}$ we define the finite-cutoff
localized modal proportion by
\begin{equation*}
\rho_{\varepsilon,\delta}^{\sigma}(R)
:=\frac{N_{\mathrm{loc}}^{\sigma}(R;\varepsilon,\delta)}
{N^{\sigma}(R)}.
\end{equation*}
Figure~\ref{fig:tefm} displays these TE and TM modal proportions for
the unit ball.  The theoretical curves are the ratios of the leading
cubic coefficients in Theorems~\ref{thm:lower_bdd} and
\ref{thm:upper_bdd} to the full Weyl coefficient in
Theorem~\ref{thm:weyl}.  Because the remainder constants are not
specified, the finite-cutoff data are presented as being consistent
with the asymptotic bounds, rather than as a strict finite-$R$
verification.
 \begin{figure}[htbp]
 	\centering
 	\includegraphics[width=10cm,height=8cm]{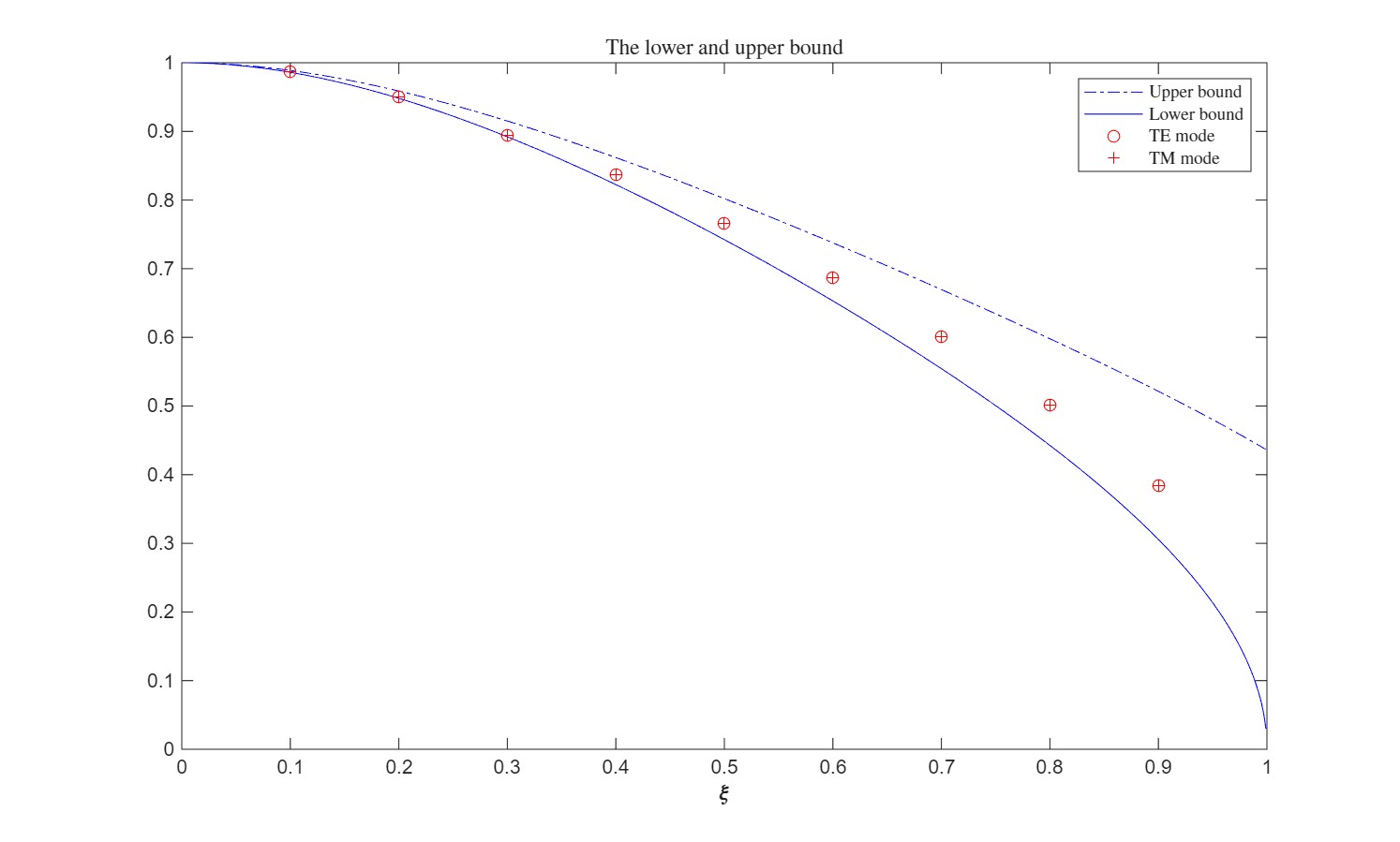}
 	\caption{Spectral distribution of surface-localized eigenmodes for the unit ball.}
 	\label{fig:tefm}
 \end{figure}

For the other shapes, including the cube and kite-shaped domain, we have no explicit formula for $\mathcal{L}_{\delta}$ as we do for the unit ball; therefore, we provide numerical evidence for the behavior predicted by Theorems~\ref{thm:lower_bdd} and~\ref{thm:upper_bdd}. To this end, we ensure that the volumes of all test domains are comparable. For $\xi>1$, the comparison with the theoretical bounds is understood through the reduction $k^*=\xi k$ and $\xi^*=1/\xi$ described above; the tables below report the original physical refractive index $\xi$. When the refractive index is set to $\xi=6$, Table~\ref{tab:1} reports the decimal localized-to-total modal proportion, rounded to four decimal places. Each discrete eigenvector is counted once; repeated eigenvalues and numerically split eigenvalue clusters are therefore counted with their computed multiplicity, in agreement with the modal counting convention used in the analysis. Thus $0.0000$ means that no localized mode was found in the corresponding computed sample. The proportions are comparable and increase as the spectral range of $k^2$ grows for the unit ball, the cube, and the kite-shaped domain, making the finite-cutoff trend directly visible. Notably, the kite geometry exhibits the highest ratio among the three domains.

 \begin{table}[htbp]
 	\centering
 	\begin{tabular}{|c|c|c|c|}
 		\hline
 		$k^2$  & Ball     & Cube    & Kite        \\
 		\hline \hline
 		$[0,50]$   & 0.0000   & 0.0000   & 0.0000    \\
 		\hline
 		$[0,100]$  & 0.0000   & 0.0000   & 0.0119    \\
 		\hline
 		$[0,150]$  & 0.0165   & 0.0181   & 0.0470  \\
 		\hline
 		$[0,200]$  & 0.0366   & 0.0403   & 0.0618   \\
 		\hline
 		$[0,256]$  & 0.0647   & 0.0691   & 0.0742   \\
 		\hline
 	\end{tabular}~\\~
	\caption{The decimal localized-to-total modal proportion for different spectral ranges of $k^2$ with $\xi=6$.}
 	\label{tab:1}
 \end{table}

Furthermore, for a fixed spectral range $k^2\in[0,256]$, Table~\ref{tab:2} gives the same four-decimal modal proportion as a function of the refractive index $\xi$. The proportion remains zero for small $\xi$ ($\xi=2,3$), then becomes positive and increases with $\xi$, displaying the transition toward stronger surface localization for the unit ball, the cube, and the kite-shaped domain. Among the three geometries, the kite again shows the highest sensitivity.
 \begin{table}[htbp]
 	\centering
 	\begin{tabular}{|c|c|c|c|}
 		\hline
 		$\xi$ & Ball     & Cube     & Kite         \\
 		\hline \hline
 		\  2    & 0.0000   & 0.0000    & 0.0000       \\
 		\hline
 		\  3    & 0.0000   & 0.0000    & 0.0000       \\
 		\hline
 		\  4   & 0.0104   & 0.0174    & 0.0086      \\
 		\hline
 		\  5   & 0.0508   & 0.0552    & 0.0405    \\
 		\hline
 		\  6   & 0.0647   & 0.0691    & 0.0742    \\
 		\hline
 	\end{tabular}~\\~
	\caption{The decimal localized-to-total modal proportion for different refractive indices $\xi$ with spectral range $k^2\in[0,256]$.}
 	\label{tab:2}
 \end{table}
 
 
\section*{Acknowledgment}
Y. Jiang was supported in part by China's National Key R\&D Program (2024YFA1012302), and the RGC Project JRFS2627-1S06.
The work of H. Liu is supported by the Hong Kong RGC General Research Funds (Projects 11311122, 11300821, and 11304224), the NSF/RGC Joint Research Fund (Project N\_CityU101/21), and the ANR/RGC Joint Research Fund (Project A\_CityU203/19). The work of K. Zhang is supported in part by the National Natural Science Foundation of China (Grant No. 12271207).


\end{document}